\newtheorem{theorem}{Theorem}[section]
\newtheorem{proposition}[theorem]{Proposition}
\newtheorem{claim}[theorem]{Claim}
\newtheorem{lemma}[theorem]{Lemma}
\newtheorem{corollary}[theorem]{Corollary}
\newtheorem{remark}{Remark}
\theoremstyle{definition}
\newtheorem{definition}[theorem]{Definition}
\def\HH{\mathcal{H}}
\def\G{\mathcal{G}}
\def\C{\mathcal{C}}
\def\Z{\mathbb{Z}}
\def\Zp{\mathbb{Z}/p\mathbb{Z}}
\newcounter{casenum}
\newcommand*{\rom}[1]{\expandafter{\romannumeral #1\relax}}
\tikzstyle{every node}=[circle, draw, fill=black!50, inner sep=0pt, minimum width=4pt]
\title{On the number of sum-free triplets of sets}
\author{ Igor Araujo 
	\footnote{Department of Mathematics, University of Illinois at Urbana-Champaign, Urbana, Illinois 61801, USA. Email: \texttt{igoraa2@illinois.edu}. Research supported by Arnold O. Beckman Research Award (UIUC Campus Research Board RB 18132).}
	\qquad J\'ozsef Balogh
	\footnote{Department of Mathematics, University of Illinois at Urbana-Champaign, Urbana, Illinois 61801, USA, and Moscow Institute of Physics and Technology, Russian Federation. E-mail: \texttt{jobal@illinois.edu}. Research supported by NSF RTG Grant DMS-1937241, NSF Grant DMS-1764123, Arnold O. Beckman Research Award (UIUC Campus Research Board RB 18132), the Langan Scholar Fund (UIUC), and the Simons Fellowship.}
	\qquad Ramon I. Garcia
	\footnote{Department of Mathematics, University of Illinois at Urbana-Champaign, Urbana, Illinois 61801, USA. Email: \texttt{rig2@illinois.edu}.}
}
\begin{document}
	
	\maketitle
	
	\begin{abstract}
		We count the ordered sum-free triplets of subsets in the group $\Zp$, i.e., the triplets $(A,B,C)$ of sets $A,B,C \subset \Zp$ for which the equation $a+b=c$ has no solution with $a\in A$, $b \in B$ and $c \in C$. Our main theorem improves on a recent result by Semchankau, Shabanov, and Shkredov \cite{Semchankau} using a different and simpler method. Our proof relates previous results on the number of independent sets of regular graphs by Kahn \cite{Kahn}, Perarnau and Perkins~\cite{Perarnau-Perkins}, and Csikvári \cite{Csikvari} to produce explicit estimates on smaller order terms. We also obtain estimates for the number of sum-free triplets of subsets in a general abelian group. 
	\end{abstract}
	
	\section{Introduction}
	
	Let $p$ be a prime number and $\Zp$ the abelian (additive) group of integers modulo $p$. We estimate the number of ordered triplets $A,B,C\subseteq \Zp$ such that there is no triplet $a\in A$, $b\in B$ and $c\in C$ with $a+b=c$. We call such $(A,B,C)$ a \emph{sum-free triplet}. 
	Consider the auxiliary 3-uniform 3-partite hypergraph $\HH$ with vertex set $X \cup Y \cup Z$, each of $X,Y,Z$ being disjoint and a copy of $\Zp$. We have an edge $\{x,y,z\}$ for $x\in X$, $y\in Y$, $z \in Z$ when $x+y=z$. The number of ordered triplets $A,B,C\subseteq \Zp$ as above is the number of independent sets $A \cup B \cup C$ in $\HH$, where $A \subset X$, $B \subset Y$, and $C \subset Z$. For simplicity, we denote by $i(H)$ the number of independent sets in the (hyper)graph~$H$.   
	
	As suggested by Semchankau, Shabanov, and Shkredov in Remark 18 of \cite{Semchankau}, we try to obtain a bound of the form 
	\begin{equation} \label{eq:main}
		i(\HH) = 3\cdot 4^p + 3p \cdot 3^p + Q_2(p) \lambda_2^p + Q_3(p) \lambda_3^p + \ldots + O((\lambda_\ell-c_*)^p),
	\end{equation}
	where $Q_2(p), \ldots, Q_{\ell-1}(p)$ are some polynomials in $p$. The motivation for \eqref{eq:main} comes from considering the separate cases when $\min \{ |A|, |B|, |C| \} = k$. For this, we fix $A$ with $|A|=k$ and count the ways of completing the sum-free triplet $(A,B,C)$. However, we do not give precise estimates for all possible choices of $A$ when $k\ge 3$. Therefore, we only obtain the upper and lower bounds below.
	
	\begin{theorem} \label{thm:main}
		The number of sum-free triplets $A,B,C\subseteq \Zp$ is at most
		$$ 3\cdot 4^p + 3p \cdot 3^p + 3\binom{p}{2} \left( \frac{1+\sqrt{5}}{2} \right)^{2p} + Q_{3,1}(p) \cdot (1+\sqrt{2})^p + Q_{3,2}(p) \cdot 458^{p/7} + 3\binom{p}{4} (1+o(1)) \cdot 31^{p/4}, $$
		and at least
		$$ 3\cdot 4^p + 3p \cdot 3^p + 3\binom{p}{2} \left( \frac{1+\sqrt{5}}{2} \right)^{2p} + Q_{3,1}(p) \cdot (1+\sqrt{2})^p + Q_{3,2}(p) \cdot 2.38898^{p},$$
		where $Q_{3,1}(p)=  3 \cdot \frac{3}{p-2} \cdot \binom{p}{3} = 3 \cdot \binom{p}{2}$, and $Q_{3,2}(p) = 3 \cdot \frac{p-5}{p-2} \cdot \binom{p}{3}$.
	\end{theorem}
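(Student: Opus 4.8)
The plan is to sort the sum-free triplets by $k:=\min\{|A|,|B|,|C|\}$. Sum-freeness is invariant under the $S_3$-action on $(A,B,C)$ generated by $(A,B,C)\mapsto(B,A,C)$ and $(A,B,C)\mapsto(A,-C,-B)$, so for $k\ge1$ the number of triplets with $\min=k$ is at most $3$ times the number of sum-free triplets with $|A|=k$, and (for the lower bound) at least $3$ times the number with $|A|=k$ and $|B|,|C|\ge k$ minus lower-order overcounting from ties. The key reduction is that, once $A$ is fixed, a pair $(B,C)$ completes $(A,B,C)$ to a sum-free triplet exactly when $B\cup C$ is an independent set of the $|A|$-regular bipartite ``difference graph'' $G_A$ on (a copy of $\Zp$) $\sqcup$ (a copy of $\Zp$), where $y\sim z$ iff $z-y\in A$; primality of $p$ makes the additive structure of $G_A$ rigid. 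Thus $i(\HH)=\sum_{k\ge0}\#\{\min=k\}$, and I would estimate each term.

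For $k=0$, inclusion--exclusion over which coordinate is empty gives $3\cdot4^p-3\cdot2^p+1$. For $k=1$, $G_A$ is a perfect matching, so $i(G_A)=3^p$, and with $p$ singletons this gives $3p\cdot3^p$. For $k=2$, since $p$ is prime $G_A$ is a single $2p$-cycle (each $Y\to Z\to Y$ step adds the nonzero difference of the two elements of $A$), so $i(G_A)=L_{2p}=\big(\tfrac{1+\sqrt5}{2}\big)^{2p}+\big(\tfrac{1-\sqrt5}{2}\big)^{2p}$, and with $\binom p2$ choices of $A$ this gives $3\binom p2\big(\tfrac{1+\sqrt5}{2}\big)^{2p}$. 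In each of these three cases the corrections from ties among $|A|,|B|,|C|$ and from imposing $|B|,|C|\ge k$ are $O(p^2 2^p)$, hence dominated by the next term in the stated bounds.

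The crux of the $k=3$ analysis is the dichotomy: for $|A|=3$, $G_A$ contains a $4$-cycle if and only if $A$ is a $3$-term arithmetic progression --- a $4$-cycle forces an equation $a_1+a_3=a_2+a_4$ with $a_i\in A$ and $a_1\notin\{a_2,a_4\}$, which for $|A|=3$ is equivalent to the AP relation. There are $\binom p2$ such $A$; dilating by the inverse common difference identifies $G_A$ with $G_{\{0,1,2\}}$, whose independent sets are counted by a $4\times4$ transfer matrix (state: the last two membership bits on the $Y$-side) with characteristic polynomial $\lambda(\lambda-1)(\lambda^2-2\lambda-1)$, giving $i(G_{\{0,1,2\}})=(1+\sqrt2)^p+(1-\sqrt2)^p+1$; this produces the term $Q_{3,1}(p)(1+\sqrt2)^p$ up to $O(p^2)$. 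For the remaining $\binom p3-\binom p2$ triples, $G_A$ is a connected cubic bipartite graph of girth at least $6$ (connected because any nonzero difference generates $\Zp$). Here I would invoke the girth-constrained strengthening of Kahn's theorem due to Perarnau--Perkins and Csikv\'ari, with the Heawood graph (the $(3,6)$-cage, satisfying $i=458$, and equal to $G_{\{0,1,3\}}$ over $\Z/7\Z$) as the extremal configuration, to get $i(G_A)\le(1+o(1))\,458^{p/7}$ per graph; a matching explicit lower bound $i(G_A)\ge 2.38898^{\,p}$ for every non-AP triple then yields both $Q_{3,2}(p)$ terms.

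For $k\ge4$ I would apply Kahn's theorem directly: $i(G_A)\le(2^{k+1}-1)^{p/k}$. For $k=4$ this is $31^{p/4}$, and summing over the $\binom p4$ choices of $A$ (and the factor $3$) gives $3\binom p4(1+o(1))\,31^{p/4}$, the final term. The $(1+o(1))$ must also absorb the tail $k\ge5$, which is the delicate part: since $\binom pk$ is exponentially large for $k$ linear in $p$, Kahn's bound alone does not suffice there. For $k$ below a small constant fraction of $p$ one checks that $(2^{k+1}-1)^{1/k}<31^{1/4}$ by a margin beating $\binom pk$, so Kahn already gives $o(\binom p4 31^{p/4})$; for $k$ a larger fraction of $p$ one must keep the minimality condition: any independent set of a regular bipartite graph on $2p$ vertices has $\le p$ vertices, and moreover sum-freeness forces $A\subseteq\Zp\setminus(C-B)$ while Cauchy--Davenport gives $|C-B|\ge|B|+|C|-1\ge 2k-1$, so for $k$ past roughly $p/3$ no such triplet exists at all, and in the intermediate range the event that $C-B$ omits $\ge k$ elements while $|B|,|C|\ge k$ is exponentially rare, making that contribution negligible. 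I expect the two genuinely delicate points to be (i) the exact girth-$6$ cubic bipartite extremal bound (and the value $i(\mathrm{Heawood})=458$), and (ii) this tail estimate; the rest is bookkeeping and small transfer-matrix computations.
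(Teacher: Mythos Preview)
Your decomposition by $k=\min\{|A|,|B|,|C|\}$ and the treatment of $k\le 3$ and of $4\le k\le\varepsilon p$ match the paper's approach closely; in particular, your observation that for $|A|=3$ the link graph $G_A$ contains a $C_4$ if and only if $A$ is a $3$-term arithmetic progression is a clean reformulation of the paper's explicit case check (the three values $x\in\{-1,2,(p+1)/2\}$ for $A=\{0,1,x\}$ are precisely the three solutions of the midpoint equations in $\Zp$). The upper bound for non-AP triples via the Perarnau--Perkins Heawood-graph bound and the lower bound via Csikv\'ari's infinite-tree result are exactly what the paper invokes.

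The genuine gap is the range $\varepsilon p\le k\le (p+1)/3$. Your Cauchy--Davenport argument correctly disposes of $k>(p+1)/3$, but the assertion that ``the event that $C-B$ omits $\ge k$ elements while $|B|,|C|\ge k$ is exponentially rare'' is not a proof, and a direct count does not close the gap: summing $\binom{p}{k}\binom{p}{b}\binom{p}{c}$ over $b,c\ge k$ with $b+c\le p-k+1$ (which is all Cauchy--Davenport buys you) yields, near $k=p/3$, a bound of order $\binom{p}{p/3}^3\approx 2^{2.75p}$, far above the required $o(31^{p/4})=o(2^{1.24p})$. Kahn's inequality fares no better in this range, since for $k$ linear in $p$ it gives only $i(G_A)\le 2^{(1+o(1))p}$ while the number of $k$-sets $A$ is still exponential in $p$. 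The paper fills this gap with the hypergraph container method: a supersaturation lemma (a robust Pollard-type statement showing that if each of $|A|,|B|,|C|\ge\varepsilon p$ and $|A|+|B|+|C|\ge(1+\varepsilon)p$ then there are $\Omega(p^2)$ solutions to $a+b=c$) forces every container to have total size at most $(1+\varepsilon)p$, and since there are only $2^{o(p)}$ containers one obtains at most $2^{(1+2\varepsilon)p}$ sum-free triplets with all parts $\ge\varepsilon p$, which is indeed $o(31^{p/4})$. You rightly flagged this tail as one of the ``genuinely delicate points'', but without the container machinery (or an equivalent device, such as the Green--Ruzsa Fourier-analytic container lemma) the argument is incomplete.
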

	
	\begin{remark} \normalfont
		The bases of the exponential terms on Theorem \ref{thm:main} are 
		$4>3> \left( \frac{1+\sqrt{5}}{2} \right)^2 \approx 2.6180 > 1+\sqrt{2} \approx 2.4142 > 458^{1/7} \approx 2.3995 > 2.38898 > 31^{1/4} \approx 2.3596$.
	\end{remark}

	\begin{remark} \normalfont
		The constant 2.38998 in Theorem \ref{thm:main} is a lower bound on the value $\alpha + \alpha^{-1} -2$, where $\alpha$ is the unique real root of $\alpha(1-\alpha)^2 = (1-2\alpha)^3$. See Theorem \ref{thm:csikvari} for more details.
	\end{remark}
	
	\begin{remark} \normalfont
		One could believe that, for each $i$, there is one $\lambda_i$ as in \eqref{eq:main} corresponding to the case $\min \{ |A|, |B|, |C| \} = i$. The proof of Theorem \ref{thm:main} below contradicts this idea, by showing that when $\min \{ |A|, |B|, |C| \}=3$ we have at least two distinct exponential terms, all of them greater than the possible exponential terms for $\min \{ |A|, |B|, |C| \} \ge 4$. 
	\end{remark}
	
	\begin{remark} \normalfont
		The proof of Theorem \ref{thm:main} uses the structure of $\Zp$. In Section \ref{sec:generalgroup}, we obtain similar estimates for a general abelian group $G$, see Theorems \ref{thm:generalcase} and \ref{thm:p1}.
	\end{remark}
	
	Let $(G,+)$ be an additive abelian group of order $N$. Using the same methods as in the case $G=\Zp$, we get the following result.
	
	\begin{theorem}\label{thm:generalcase}
		Fix $\varepsilon \in (0,1)$. Then for sufficiently large $N$ the following holds. Let $(G,+)$ be an additive abelian group of order $N$. The number of ordered triplets $A,B,C\subseteq G$ such that there is no triplet $a\in A$, $b\in B$ and $c\in C$ with $a+b=c$ is at most
		$$ 3\cdot 4^N + 3N \cdot 3^N + 2^{(3/2+3\varepsilon)N} . $$
	\end{theorem}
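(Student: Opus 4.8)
To prove Theorem~\ref{thm:generalcase}, the plan is to mimic the argument used for $G=\Zp$. We form the $3$-uniform $3$-partite auxiliary hypergraph $\HH$ on $X\cup Y\cup Z$, where $X,Y,Z$ are disjoint copies of $G$ and $\{x,y,z\}$ is an edge exactly when $x+y=z$; a sum-free triplet is then precisely an independent set $A\cup B\cup C$ of $\HH$ with $A\subseteq X$, $B\subseteq Y$, $C\subseteq Z$, and we count these according to $m:=\min\{|A|,|B|,|C|\}$. The four terms of the bound come, in the order written, from $m=0$, from $m=1$, from $m\ge 3$, and from $m=2$.

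If $m=0$ then one of the three sets is empty and the other two are arbitrary, so there are at most $3\cdot 4^N$ such triplets. If $m=1$, say $A=\{g\}$, then $(B,C)$ must be an independent set in the bipartite graph on $Y\cup Z$ with edges $b\sim g+b$; this graph is a perfect matching on $2N$ vertices, which has $3^N$ independent sets, so fixing $g$ and the position of the singleton bounds this case by $3N\cdot 3^N$. If $m=2$, say $A=\{a_1,a_2\}$ with $a_1\neq a_2$, then $(B,C)$ is an independent set of the $2$-regular bipartite graph $G_A$ on $Y\cup Z$ with edges $b\sim a_1+b$ and $b\sim a_2+b$; following components, $G_A$ is a disjoint union of $N/d$ cycles of length $2d$, where $d=\operatorname{ord}(a_1-a_2)\ge 2$, so $i(G_A)=L_{2d}^{N/d}$ with $L_k$ the $k$-th Lucas number. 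An elementary check (treating $d\le 3$ directly and using $L_{2d}\le\big(\tfrac{1+\sqrt5}{2}\big)^{2d}+1$ for $d\ge 4$) gives $L_{2d}^{N/d}\le 7^{N/2}$ for all $d\ge 2$, with equality only when $d=2$. Summing over the $\binom N2$ pairs $\{a_1,a_2\}$ and the $3$ positions, and accounting for the lower-order contributions of pairs with $d\ge 3$, this case is bounded by $3\binom N2(1+o(1))\,7^{N/2}$, as claimed.

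For $m\ge 3$ we split once more. Fix a smallest set $A$ with $|A|=a\ge 3$; a completion $(B,C)$ is an independent set in the $a$-regular bipartite graph $G_A$ on $2N$ vertices, so Kahn's theorem gives $i(G_A)\le(2^{a+1}-1)^{N/a}$. If $3\le a\le K$ for a large absolute constant $K=K(\eps)$, then, since $(2^{a+1}-1)^{1/a}$ is maximized over $a\ge 3$ at $a=3$ with value $15^{1/3}=2.466\dots$, and there are at most $N^{K}$ choices of $A$ (and $3$ positions), this subcase contributes at most $3KN^{K}\cdot 15^{N/3}=2^{(\frac13\log_2 15+o(1))N}=2^{(1.302\dots+o(1))N}$, which is below $2^{(3/2+2\eps)N}$ once $N$ is large.

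The main obstacle is the remaining subcase $m>K$, in which $|A|,|B|,|C|$ are all of order $\Theta(N)$; here Kahn's bound is far too weak, as $\binom Na$ alone is of size $2^{\Theta(N)}$, so additive structure is essential. Write $|A|\le|B|\le|C|$. From $(C-A)\cap B=\emptyset$ we get $|B|+|C|\le N$, hence $a\le|B|\le N/2$, while $C\subseteq G\setminus(A+B)$ leaves at most $2^{N-|A+B|}$ choices of $C$, so the whole subcase is at most a constant times $\sum_{A,B}2^{N-|A+B|}$, the sum over pairs with $|A+B|\le N-3$. The difficulty is that this sum is not controlled by a naive entropy estimate: a pair with $|A|=|B|\approx N/3$ and $|A+B|\approx 2N/3$ has about $2^{N/3}$ completions, and $\binom{N}{N/3}^2 2^{N/3}=2^{(2H(1/3)+1/3+o(1))N}$ already far exceeds $2^{3N/2}$. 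What saves the count is that such pairs are rare: by Kneser's theorem $|A+B|\ge|A+H|+|B+H|-|H|$ with $H=\operatorname{Stab}(A+B)$, and a stability analysis — Freiman/Vosper-type structure when $H$ is trivial, and a passage to $G/H$ (where the induced stabilizer is trivial) when $H$ is nontrivial — shows that whenever $|A+B|$ is substantially below $N$, the sets $A$ and $B$ must lie in boundedly many cosets of a nontrivial subgroup; the number of such subgroups, cosets, and boundary choices is small enough that, after summing $2^{N-|A+B|}$ over all such pairs, the total remains below $2^{(3/2+2\eps)N}$. The extremal configuration, $A,B$ inside an index-$2$ subgroup and $C$ in its complement, already produces $\sim 2^{3N/2}$ triplets, which is why the exponent $3/2$ occurs; quantifying the Kneser-stability step so that its loss is absorbed into the $2\eps N$ in the exponent is the technical heart of the argument.
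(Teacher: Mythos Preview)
Your treatment of the small cases $m=0,1,2$ and of the intermediate range $3\le m\le K$ via Kahn's theorem matches the paper's argument essentially verbatim (the paper just applies Kahn directly for $m=2$ rather than going through Lucas numbers, but the bound $7^{N/2}$ is the same). The divergence, and the gap, is entirely in the regime where the smallest set is large.

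For $m>K$ you propose to bound $\sum_{A,B}2^{N-|A+B|}$ by a Kneser/Freiman--Vosper stability analysis, but you only sketch this and explicitly flag the quantitative step as ``the technical heart of the argument'' without carrying it out. This is a genuine gap: counting pairs $(A,B)$ with prescribed small sumset well enough to make the total come out below $2^{(3/2+2\varepsilon)N}$ is far from routine, and your outline (``boundedly many cosets of a nontrivial subgroup'') does not by itself control the number of such pairs --- a union of few cosets of $H$ still has $2^{\Theta(N)}$ subsets when $|H|=\Theta(N)$, so one needs a much finer enumeration over subgroups, coset patterns, and sumset sizes than you indicate. There is also a minor mismatch in your split: with $K=K(\varepsilon)$ fixed you assert that for $m>K$ all three sets have size $\Theta(N)$, but this fails for $K<m\ll N$; the paper's threshold at $\varepsilon N$ avoids this.

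The paper handles the large-$m$ regime by a completely different and much shorter route: it applies the hypergraph container lemma directly to the $3$-uniform hypergraph $\HH$, using a supersaturation statement (if $|A|,|B|,|C|\ge\varepsilon N$ and $|A|+|B|+|C|\ge(3/2+\varepsilon)N$ then there are $\Omega(N^2)$ solutions to $a+b=c$) to force every container to have total size at most $(3/2+\varepsilon)N$. This yields at most $2^{(3/2+2\varepsilon)N}$ independent sets in one stroke, with no structural analysis of individual pairs $(A,B)$ needed. If you want to complete your proof along the lines you sketched, you would effectively be reproving a container-type statement by hand; it is far simpler to invoke containers plus supersaturation as the paper does.
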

	
	\begin{remark} \normalfont
		In contrast with Theorem \ref{thm:main}, in the proof of Theorem \ref{thm:generalcase} we will see that the we have more triplets corresponding to the case $k \ge \varepsilon N$ than from the case $k=2$. Indeed, the number of triplets from the case $k=2$ is bounded by $3\binom{N}{2} 7^{N/2} \ll 2^{3N/2}$, since $7^{1/2} \approx 2.64575 < 2^{3/2} \approx 2.8284$ .
	\end{remark}
	
	\begin{remark} \normalfont
		Theorem \ref{thm:generalcase} is sharp for $G=(\Z / 2\Z)^n$, when we have at least 
		$$3\cdot 4^N + 3N \cdot 3^N + 3\binom{N}{2} 7^{N/2}+(\log_2 N) \cdot 2^{3N/2} - O((\log_2 N)^2 \cdot 2^{3N/4}) $$
		triplets, where $N=2^n$. The first three terms come from the cases of $\min(|A|,|B|,|C|)=0,1,2$, respectively. The last terms come from, as in Remark \ref{rem:index2}, the $n$ different groups of index $2$ in $(\Z / 2\Z)^n$ and taking overcounting into consideration.
	\end{remark}
	
	\begin{remark} \normalfont
		In \cite{Semchankau}, Semchankau, Shabanov, and Shkredov proved that the number of sum-free triplets is 
		$$3\cdot 4^N + 3N \cdot 3^N + O((3-c_*)^N),$$
		for some absolute constant $c_*>0$. Theorems \ref{thm:main} and \ref{thm:generalcase} is an improvement on that estimates, as we can explicit compute the constant $c_*$ and the base of the next exponential term.
	\end{remark}
	
	\section{Overview of the paper}
	
	We will prove Theorem \ref{thm:main} by considering the separate cases when $\min \{ |A|, |B|, |C| \} = k$. For this, for various values of $k$, we fix $A$ with $|A|=k$ and count the ways of completing the sum-free triplet $(A,B,C)$. The link graph of $A$, denoted by $\HH_A$, is the bipartite graph with vertex set $V_1\cup V_2$ and edge set the pairs $\{y,z\}$ where $x+y=z$ for some $x \in A$. For each fixed small $A$, we will count the independent sets $i(\HH_A)$ in the link graph of $A$. 
	After taking overcounting into consideration, we conclude that the number of sum-free triplets $(A,B,C)$ with $\min \{ |A|, |B|, |C| \} = k$ is
	$Q_k(p) i(\HH_A) + O(p^{2k} \cdot 2^p)$ for some polynomial $Q_k$ in $p$.
	
	The first cases, when $k=0$ or $1$, were dealt with in \cite{Semchankau}, and we include them here for completeness. 
	In Section~\ref{sec:smallercases}, we precisely (up to the first-order term) count the independent sets for each of the cases $k \le 3$. From the cases $k\le 2$, the number of sum-free triplets is 
	\begin{equation}\label{eq:caseZp,k=0,1,2}
		3\cdot 4^p + 3p \cdot 3^p + 3\binom{p}{2} \cdot \left( \frac{1+\sqrt{5}}{2} \right)^{2p} + O(p^42^p).
	\end{equation}
	
	For $k=3$, we will see that the main term is $Q_{3,1}(p) \cdot (1+\sqrt{2})^p$ for some polynomial $Q_{3,1}(p)$, and it comes from choices of $A$ for which the link graph $\HH_A$ contains cycles $C_4$ as a subgraph. To show that, we apply a general result (namely, Theorem \ref{thm:perkins}) for 3-regular graphs with girth at least 5 by Perarnau and Perkins \cite{Perarnau-Perkins} to upper bound the number of independent sets in the case the link graph does not contain $C_4$ as a subgraph. We obtain a lower bound on the number of independent sets by using a general result (namely, Theorem \ref{thm:csikvari}) for regular graphs by Csikvári~\cite{Csikvari}.
	
	In Section \ref{sec:smallcase}, we deal with the cases $4 \le k \le \varepsilon p$ for any fixed $\varepsilon < 1/10^3$. We make use of an upper bound for general regular graphs (see Theorem \ref{thm:kahn}), by Kahn \cite{Kahn}, to conclude that the number of sum-free triplets, in this case, is at most $3\binom{p}{4} (1+o(1)) \cdot 31^{p/4}$, lower than the number of sum-free triplets for $k = 3$.
	
	In Section \ref{sec:bigcase}, we use the method of hypergraph containers to give an upper bound on the number of independent sets in the remaining case $k \ge \varepsilon p$. The idea is to use a supersaturation result for the number of sums $x+y=z$ with $x\in A$, $y \in B$, and $z\in C$ when $A,B,C$ are large, together with the Hypergraph Container Lemma (Theorem \ref{thm:container}) to obtain an upper bound on the number of independent sets of $\HH$ with a large intersection with each of the parts $V_0$, $V_1$ and $V_2$.
	Our method generalizes for other groups as well. Thus, we show the estimates for general abelian groups $G$ in Section \ref{sec:bigcase}, and we briefly discuss the case when $G=\Zp$ in the beginning of Section \ref{sec:proof}, where we conclude the proof of Theorem \ref{thm:main}. 
	
	Finally, in Section \ref{sec:generalgroup}, we prove Theorem \ref{thm:generalcase} and discuss the differences for the estimates for general abelian groups.
	We highlight that if one knows the number of elements of each order in $G$, a careful look at our method can lead us to precise estimates in the case $k=2$. Subsequently, we examine the behavior of the estimates we obtain for different groups $G$, depending on the prime factorization of its order.
	
	\section{Smaller cases: $k \le 3$} \label{sec:smallercases}
	
	When $|A|=0$, every choice of $B$ and $C$ works. In this case, $i(\HH_A)=4^p$. In total, we have $3 \cdot 4^p - 3 \cdot 2^p + 1$ triplets with at least one of the sets empty. 
	
	When $|A|=1$, the link graph $\HH_A$ is a matching of size $p$, which has $3^{p}$ independent sets. In total, we have $3p\cdot (3^p-1) - 3p^2 \cdot (2^{p-1}-1) + p^2(p-1)$ triplets with $\min \{ |A|, |B|, |C| \} = 1$.
	
	Note that, for each fixed $k$, we need to subtract a factor of $O(p^{2k} \cdot 2^p)$ from the counting of triplets $(A,B,C)$ to avoid overcounting the cases when at least two of the sets $A$, $B$ or $C$ have at most $k$ elements. For the sake of simplicity, from now on, we will focus only on counting $i(\HH_A)$. Hence, we will obtain a bound of the form $Q_k(p) i(\HH_A) + O(p^{2k} \cdot 2^p)$ for some polynomial $Q_k$ in $p$. In what follows, $k=|A|$, and we count the independent sets $i(\HH_A)$ in the link graph of $A$. 
	
	When $|A|=2$, the link graph $\HH_A$ is a Hamiltonian cycle. We need the following easy claim.
	
	\begin{claim} \label{claim:indset_cycle}
		The number of independent sets in a cycle of length $2n$ is $\phi^{2n} + \phi^{-2n}$, where $\phi = \frac{1+\sqrt{5}}{2}$.
	\end{claim}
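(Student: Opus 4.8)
The plan is to compute the number of independent sets in the cycle $C_{2n}$ via the standard transfer-matrix method, which immediately yields the closed form in terms of the golden ratio. First I would recall that the number of independent sets in any graph $G$, when $G$ is a cycle on $m$ vertices, satisfies a linear recurrence inherited from the path: if $a_m$ denotes the number of independent sets in the path $P_m$, then $a_m = a_{m-1} + a_{m-2}$ with $a_0 = 1$, $a_1 = 2$, so $a_m = F_{m+2}$ is a Fibonacci number. For the cycle $C_m$, conditioning on whether a fixed vertex $v$ is in the independent set gives $i(C_m) = a_{m-1} + a_{m-3} = F_{m+1} + F_{m-1} = L_m$, the $m$-th Lucas number. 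Setting $m = 2n$ gives $i(C_{2n}) = L_{2n}$.

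Next I would invoke Binet-type formulas: the Lucas numbers satisfy $L_m = \phi^m + \psi^m$ where $\phi = \frac{1+\sqrt5}{2}$ and $\psi = \frac{1-\sqrt5}{2}$ are the two roots of $t^2 = t+1$. Since $\phi\psi = -1$, we have $\psi = -1/\phi$, hence $\psi^{2n} = \phi^{-2n}$ (the even power kills the sign). Therefore $i(C_{2n}) = L_{2n} = \phi^{2n} + \phi^{-2n}$, which is exactly the claimed identity. Alternatively, and perhaps cleaner to present, I would set up the $2\times 2$ transfer matrix $T = \begin{pmatrix} 1 & 1 \\ 1 & 0 \end{pmatrix}$ recording allowed transitions (current vertex out/in), note $i(C_m) = \operatorname{tr}(T^m)$, and observe that $T$ has eigenvalues $\phi$ and $\psi = -1/\phi$, so $\operatorname{tr}(T^{2n}) = \phi^{2n} + \psi^{2n} = \phi^{2n} + \phi^{-2n}$.

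There is essentially no obstacle here; the only points requiring a line of care are verifying the base cases of the recurrence and checking the sign bookkeeping $\psi^{2n} = \phi^{-2n}$ so that the minus sign genuinely disappears for even cycles (for odd cycles one would instead get $\phi^m - \phi^{-m}$). I would write the argument in two or three sentences — state the transfer matrix, diagonalize it, read off the trace — and leave the elementary Fibonacci/Lucas verification to the reader, since the claim is labelled "easy."
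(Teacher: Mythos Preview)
Your proposal is correct and follows essentially the same route as the paper: both condition on a fixed vertex of the cycle to reduce to paths, identify the path count with Fibonacci numbers, and then finish with a Binet-type formula. The paper writes the answer as $F_{2n+1}+F_{2n-1}$ and simplifies via $\phi+\phi^{-1}=\sqrt{5}$, while you name this sum as the Lucas number $L_{2n}$ and simplify via $\psi=-1/\phi$; your optional transfer-matrix variant is a clean alternative not in the paper, but the underlying argument is the same.
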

	
	\begin{proof}
		Denote  by $F_{m+2}$ the number of independents sets in an $m$-vertex path. We break the counting into cases according to whether a fixed vertex is in the independent set or not. The number of independent sets in the cycle is then $F_{2n+1}+F_{2n-1}$. To count the independent sets in a path we repeat the procedure, breaking into cases of when an endpoint of the path is in the independent set or not. We obtain $F_{m+2}=F_{m+1}+F_m$, $F_2=1$, and $F_3=2$. Therefore, $F_m$ is the $m$-th Fibonacci number. 
		
		In this case, $i(\HH_A) = \frac{\phi^{2n+1}+\phi^{-2n-1}}{\sqrt{5}} + \frac{\phi^{2n-1}+\phi^{-2n+1}}{\sqrt{5}} = \phi^{2n} + \phi^{-2n}$, where we used $\phi+\phi^{-1}=\sqrt{5}$ for the last equality. 
	\end{proof}
	
	\noindent In total, we have 
	$ 3\binom{p}{2} \cdot \left( \frac{1+\sqrt{5}}{2} \right)^{2p} + O(p^{4} 2^{p})$
	triplets with $\min \{ |A|, |B|, |C| \} = 2$. This implies that $Q_2(p) = \frac{3p(p-1)}{2} $ and $\lambda_2 = \left( \frac{1+\sqrt{5}}{2} \right)^2 \approx 2.6180$ on \eqref{eq:main} above.
	
	\subsection{Case $k=3$}
	
	When $|A|=3$, we assume without loss of generality that $A=\{0,1,x\}$ and break into cases depending on whether $\HH_A$ contains a $C_4$ or not. For this, let us first prove the following.
	
	\begin{claim} \label{claim:c4}
		Let $A=\{0,1,x\}$. The link graph $\HH_A$ contains $C_4$ as a subgraph if and only if $x\in \{-1,2,\frac{p+1}{2}\}$.
	\end{claim}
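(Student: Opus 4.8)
The plan is to reformulate "$\HH_A$ contains $C_4$" as a purely additive condition on $A$, solve that condition using $|A|=3$, and then translate the resulting structure ("$A$ is a $3$-term arithmetic progression") into the stated condition on $x$. Throughout we may assume $x\notin\{0,1\}$, so $|A|=3$ and the shifts $0,1,x$ are pairwise distinct; hence $\HH_A$ is a simple bipartite $3$-regular graph on $V_1\cup V_2$, and a $C_4$-subgraph of $\HH_A$ is exactly a copy of $K_{2,2}$, i.e.\ vertices $y_1\ne y_2$ in $V_1$ and $z_1\ne z_2$ in $V_2$ with $z_j-y_i\in A$ for all $i,j$.

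Writing $\alpha=z_1-y_1$, $s=y_2-y_1\ne 0$ and $t=z_2-z_1\ne 0$, the four adjacency conditions become $\alpha,\ \alpha-s,\ \alpha+t,\ \alpha+t-s\in A$ with $\alpha\in A$; that is, $A$ contains the translate $\alpha+(\{0,t\}+\{0,-s\})$ of a sumset of two $2$-element sets. I would then observe that since $|A|=3$, these four elements cannot be distinct, so two coincide. Running over the six pairs: $\alpha=\alpha-s$, $\alpha-s=\alpha+t-s$, $\alpha+t=\alpha+t-s$ all force $s=0$, and $\alpha=\alpha+t$, $\alpha-s=\alpha+t-s$ force $t=0$ — all excluded. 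The two remaining coincidences force $t=s$ (giving $\{\alpha-s,\alpha,\alpha+s\}\subseteq A$) or $t=-s$ (giving $\{\alpha-2s,\alpha-s,\alpha\}\subseteq A$). In either case the three listed elements are pairwise distinct because $s\ne 0$ and $2s\ne 0$ in $\Zp$ (here $p$ is an odd prime), so being a $3$-subset of the $3$-set $A$ they are all of $A$; hence $A$ is a nondegenerate $3$-term arithmetic progression. Conversely, if $A=\{u-d,u,u+d\}$ with $d\ne 0$, then taking $y_1=0,\ y_2=d,\ z_1=u,\ z_2=u+d$ one checks $z_1-y_1=u$, $z_1-y_2=u-d$, $z_2-y_1=u+d$, $z_2-y_2=u$ all lie in $A$, with $y_1\ne y_2$ and $z_1\ne z_2$, so $\HH_A\supseteq C_4$. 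Thus $\HH_A$ contains $C_4$ if and only if $A$ is a nondegenerate $3$-term AP.

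It remains to characterize when $A=\{0,1,x\}$ is such a progression. Since $p$ is odd, three distinct elements of $\Zp$ form an arithmetic progression exactly when one of them is the average of the other two. Checking the three possibilities: $1=\tfrac{0+x}{2}$ gives $x=2$; $0=\tfrac{1+x}{2}$ gives $x=-1$; and $x=\tfrac{0+1}{2}$ gives $x=\tfrac{p+1}{2}$. Hence $\HH_A$ contains $C_4$ if and only if $x\in\{-1,2,\tfrac{p+1}{2}\}$, as claimed.

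I do not expect a genuine obstacle here; the only points requiring care are verifying that $\HH_A$ is simple so that a $C_4$-subgraph really is a $K_{2,2}$ (ruling out the degenerate configurations with $y_1=y_2$ or $z_1=z_2$), and making explicit use of $p$ being an odd prime in the steps "$2s\ne 0$" and in the midpoint characterization of $3$-term progressions.
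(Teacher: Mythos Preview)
Your proof is correct and takes a cleaner, more conceptual route than the paper. The paper fixes the vertex $(Y,1)$, invokes vertex-transitivity, writes out its three neighbours $(Z,1),(Z,2),(Z,x+1)$ and its six second neighbours explicitly, and then checks by hand which pairs of second neighbours can coincide, producing a list of a dozen equations in $x$ to solve. You instead extract the underlying additive structure: a $C_4$ in $\HH_A$ is exactly a configuration $\{\alpha,\alpha-s,\alpha+t,\alpha+t-s\}\subseteq A$ with $s,t\ne 0$, and since $|A|=3$ this forces $A$ to be a nondegenerate $3$-term arithmetic progression, which you then pin down via the midpoint criterion. Your approach makes transparent \emph{why} the answer is what it is (the three values of $x$ are precisely those making $\{0,1,x\}$ an AP) and would generalize immediately to other odd-order groups; the paper's approach is more mechanical but requires no insight beyond bookkeeping.

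One small slip in your case analysis: the equality $\alpha-s=\alpha+t-s$ forces $t=0$, not $s=0$, and you list it in both groups; in fact there are exactly two coincidences forcing $s=0$ and two forcing $t=0$, not three and two. This is harmless, since either way only the two nontrivial cases $t=\pm s$ survive and the rest of the argument goes through unchanged.
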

	
	\begin{proof}
		We write $(Y,t)$ for the vertex corresponding to $t \in \Zp$ in $B$ and, similarly, $(Z,t)$ for $t \in \Zp$ in $C$. Since the graph $\HH_A$ is vertex transitive, it is enough to check when $(Y,1)$ is in a $C_4$. The neighborhood of $(Y,1)$ is $(Z,1), (Z,2), (Z,x+1)$.
		The second neighborhood of $(Y,1)$ is $(Y,0), (Y,1-x), (Y,2), (Y,2-x), (Y,x), (Y,x+1)$. A vertex is contained in a $C_4$ if and only if two distinct neighbors have one extra common neighbor. Therefore, $(Y,1)$ is contained in a $C_4$ if and only if two of the above second neighbors coincide. This is equivalent to one of the following equations to hold (we include trivial equations here only for the sake of completeness of the proof)
		\begin{align*}
			0=& \, 2 & \, 1-x=& \, 2 &\, 2= & \, x \\
			0=& \, 2-x & \, 1-x=& \, 2-x &\, 2= & \, x+1 \\
			0=& \, x & \, 1-x =& \, x &\, 2-x = & \, x \\
			0=& \, x+1 & \, 1-x=& \, x+1 & \, 2-x = & \, x+1
		\end{align*}
		
		Removing the trivial cases $x=0$ and $x=1$, the only solutions are $x\in \{-1,2,\frac{p+1}{2}\}$.
	\end{proof}
	
	Therefore, when $x \not\in \{ -1,2,\frac{p+1}{2} \}$ we make use of the following result.
	
	\begin{theorem}[Perarnau-Perkins, Corollary 6 in \cite{Perarnau-Perkins}] \label{thm:perkins}
		Let $H_{3,6}$ be the point-line incidence graph of the Fano plane, also called the Heawood graph. For any cubic $n$-vertex graph $H$ of girth at least 5,
		$$i(H)^{1/n} \le i(H_{3,6})^{1/14} = 458^{1/14},$$
		with equality if and only if $H$ is a union of copies of $H_{3,6}$.
	\end{theorem}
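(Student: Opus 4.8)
Since Theorem~\ref{thm:perkins} is quoted verbatim from Perarnau and Perkins, I only sketch how I would attack it. The right tool is the \emph{occupancy fraction method}. For an $n$-vertex graph $H$ and a fugacity $\lambda>0$, let $Z_H(\lambda)=\sum_{I}\lambda^{|I|}$ be the sum over independent sets of $H$, let $\mu_\lambda$ be the hard-core measure $\mu_\lambda(I)\propto\lambda^{|I|}$, and let $\alpha_H(\lambda)=\frac1n\mathbb{E}_{I\sim\mu_\lambda}|I|$ be the occupancy fraction. Since $\lambda\,\tfrac{d}{d\lambda}\log Z_H(\lambda)=n\,\alpha_H(\lambda)$ and $Z_H(0)=1$, we get
$$\frac1n\log i(H)=\frac1n\log Z_H(1)=\int_0^1\frac{\alpha_H(\lambda)}{\lambda}\,d\lambda .$$
Hence it suffices to prove the \emph{pointwise} inequality $\alpha_H(\lambda)\le\alpha_{H_{3,6}}(\lambda)$ for every $\lambda\in(0,1]$ and every cubic $H$ of girth at least $5$; integrating and exponentiating then gives $i(H)^{1/n}\le i(H_{3,6})^{1/14}=458^{1/14}$.

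To bound $\alpha_H(\lambda)$ I would pick a uniformly random vertex $v$ and reveal $I\sim\mu_\lambda$ on all of $V\setminus N[v]$. The girth hypothesis enters exactly here: having no triangle makes the three neighbours $u_1,u_2,u_3$ of $v$ pairwise non-adjacent, and having no $4$-cycle makes their neighbourhoods disjoint apart from $v$, so the $2$-ball around $v$ is a tree. By the spatial Markov property of the hard-core model, conditioned on the revealed configuration $\sigma$ the probability that $v$ is occupied equals $\lambda/\bigl(\lambda+(1+\lambda)^{s}\bigr)$, where $s=s(\sigma)$ is the number of $u_i$ that are \emph{unblocked} (have no revealed occupied neighbour). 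Averaging over $v$ and over $\mu_\lambda$, $\alpha_H(\lambda)$ becomes an explicit function of the joint law of the unblocking pattern of $(u_1,u_2,u_3)$; a further ``one more level'' expansion controls that law, since the unblocking probability of each $u_i$ is itself an occupancy-type quantity and is constrained by the hard-core tree recursion.

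The heart of the proof is then a low-dimensional constrained optimisation: one must show that, subject to those self-consistency constraints, the functional above is maximised by the \emph{symmetric} unblocking law, whose value is exactly $\alpha_{H_{3,6}}(\lambda)$. This is the step that singles out the Heawood graph (rather than, e.g., the Petersen graph) and the constant $458$, and I expect it to be the main obstacle; in practice it is reduced to a short list of polynomial inequalities in $\lambda$ that may require a computer-assisted check. For the equality case, $i(H)^{1/n}=458^{1/14}$ forces $\alpha_H(\lambda)=\alpha_{H_{3,6}}(\lambda)$ for all $\lambda$, hence the local unblocking law at every vertex must be the extremal symmetric one for every $\lambda$; a rigidity argument — every $2$-ball is then forced to match that of $H_{3,6}$, and the girth constraint rules out competing gluings — shows that each connected component of $H$ is a copy of $H_{3,6}$. (The entropy/tensor-power approach of Kahn and Zhao does not suffice here: it yields $i(H)\le 15^{n/6}$, attained by disjoint copies of $K_{3,3}$, and is insensitive to girth.)
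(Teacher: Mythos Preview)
The paper does not prove Theorem~\ref{thm:perkins}; it is quoted as a black-box result from Perarnau and Perkins and used only through its conclusion. There is therefore no proof in the present paper to compare your attempt against.

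That said, your sketch is an accurate outline of the method Perarnau and Perkins actually use in~\cite{Perarnau-Perkins}: the occupancy-fraction framework, the integral identity $\tfrac{1}{n}\log Z_H(1)=\int_0^1 \alpha_H(\lambda)/\lambda\,d\lambda$, the spatial-Markov conditioning on the second neighbourhood (where girth~$\ge 5$ guarantees the $2$-ball is a tree), and the reduction to a finite constrained optimisation over local configurations are all the right ingredients. You are also right that the Kahn--Zhao entropy approach is blind to girth and gives only $15^{n/6}$, so something genuinely different is needed. The one place your sketch is vaguer than it ought to be is the optimisation step itself: in the original, the constraints are not just ``self-consistency'' but a carefully chosen set of linear relations among the probabilities of local patterns (drawn from two different ways of computing the same expectation), and it is the precise choice of these constraints that makes the LP tight at the Heawood graph. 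Your ``one more level'' description gestures at this but would need to be made concrete to constitute a proof. Since the paper under review does not attempt any of this, your write-up already goes well beyond what is required here.
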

	
	By combining the previous two results, we obtain the following when $\HH_A$ is bipartite and $C_4$-free.
	
	\begin{corollary}
		If $A=\{0,1,x\}$ and $x \not\in \{-1,2,\frac{p+1}{2}\}$, then 
		$$i(\HH_A) \le 458^{p/7} \approx 2.3995^p.$$
	\end{corollary}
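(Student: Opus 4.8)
The plan is to recognize $\HH_A$ as a cubic bipartite graph of girth at least $5$ and then apply the Perarnau--Perkins bound (Theorem~\ref{thm:perkins}) essentially verbatim. So first I would check the hypotheses. For $A=\{0,1,x\}$ the link graph $\HH_A$ is $3$-regular: the vertex $(Y,t)$ has exactly the three neighbours $(Z,t),(Z,t+1),(Z,t+x)$, and these are distinct because $0,1,x$ are distinct elements of $\Zp$; symmetrically, every vertex on the $Z$-side has degree $3$. The graph is bipartite with both parts of size $p$, so it has $n=2p$ vertices, contains no odd cycle, and (since $0,1,x$ are distinct) is simple.

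The key structural input is Claim~\ref{claim:c4}: the hypothesis $x\notin\{-1,2,\frac{p+1}{2}\}$ is precisely the condition under which $\HH_A$ contains no $C_4$ as a subgraph. Together with the absence of odd cycles (bipartiteness) and of multiple edges, this forces $\mathrm{girth}(\HH_A)\ge 6\ge 5$. Hence $\HH_A$ — or, if it happens to be disconnected, each of its components, which is permitted by the statement of Theorem~\ref{thm:perkins} — satisfies the hypotheses of that theorem.

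Applying Theorem~\ref{thm:perkins} with $n=2p$ gives
$$ i(\HH_A)^{1/(2p)} \le i(H_{3,6})^{1/14} = 458^{1/14}, $$
so that $i(\HH_A)\le 458^{2p/14}=458^{p/7}$, and a direct numerical evaluation yields $458^{1/7}\approx 2.3995$, which is the claimed estimate.

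There is no real obstacle here: the whole argument is just verifying that Claim~\ref{claim:c4} puts us in the girth-$\ge 5$ regime and then quoting Theorem~\ref{thm:perkins}. The only minor bookkeeping I would be careful about is that $p$ is an odd prime with $p\ge 3$ (so that $\frac{p+1}{2}$ is meaningful and $|A|=3$ is possible), and that regularity and simplicity of $\HH_A$ follow from $0,1,x$ being pairwise distinct modulo $p$.
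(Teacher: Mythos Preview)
Your argument is correct and is exactly the paper's approach: the corollary is stated there as an immediate combination of Claim~\ref{claim:c4} (ensuring $\HH_A$ is $C_4$-free, hence of girth $\ge 6$) with Theorem~\ref{thm:perkins} applied to the cubic bipartite graph $\HH_A$ on $2p$ vertices. Your additional checks that $\HH_A$ is simple and $3$-regular are routine but welcome.
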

	
	The Heawood graph, the point-line incidence graph of the Fano plane, is the link graph $\HH_A$ when $p=7$ and $x=3$, which means we have equality in one case above. For larger values of $p$, the next result tells us we are not giving away too much by using this upper bound, as it implies $i(\HH_A) \ge (2.38898\dots )^p$. 
	
	Theorem \ref{thm:csikvari} below follows from Theorem 8.1 of \cite{Csikvari}, which follows from a result in \cite{Ruozzi}. Theorem 8.1 of \cite{Csikvari} is more general as it states that the infinite regular tree minimizes the normalized independence polynomial over all regular graphs. Here, we only state its immediate consequence for the total number of independent sets. 
	
	\begin{theorem} \label{thm:csikvari}
		Let $\alpha$ be the unique solution of
		$$ \frac{\alpha}{1-\alpha} = \left( \frac{1-2\alpha}{1-\alpha} \right)^d $$
		in the interval $[0,1/2]$. Let $H$ be a $d$-regular bipartite graph on $n$ vertices. Then
		$$i(H) \ge \left( \frac{(1-\alpha)^{d-1}}{\alpha} \right)^{n/2}.$$
	\end{theorem}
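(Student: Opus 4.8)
The plan is to deduce this from Theorem~8.1 of \cite{Csikvari} by specializing the activity to $\lambda=1$ and then reading off the resulting constant for the infinite $d$-regular tree. Recall the independence polynomial $i(G,\lambda)=\sum_{I}\lambda^{|I|}$, summed over all independent sets $I$ of $G$, so that $i(G)=i(G,1)$. Theorem~8.1 of \cite{Csikvari}---which itself rests on the result of \cite{Ruozzi} that, for models of this type, the Bethe partition function is a lower bound on the true one---gives, for every $d$-regular bipartite graph $G$ on $n$ vertices and every $\lambda>0$, the inequality $i(G,\lambda)^{1/n}\ge Z_d(\lambda)$, where $Z_d(\lambda)$ is the normalized (per-vertex) independence polynomial of the infinite $d$-regular tree $T_d$; that is, $\log Z_d(\lambda)$ is the Bethe free energy of the activity-$\lambda$ hard-core model on $T_d$ at its symmetric fixed point, equivalently the limit of $i(G_k,\lambda)^{1/|V(G_k)|}$ along any sequence of $d$-regular graphs $G_k$ of girth tending to infinity. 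Applying this with $\lambda=1$ and $G=H$ reduces the theorem to the identity $Z_d(1)=\big(\tfrac{(1-\alpha)^{d-1}}{\alpha}\big)^{1/2}$, with $\alpha$ the root of the displayed equation in $[0,1/2]$, which I would verify directly.

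For that, I would run the standard hard-core recursion on $T_d$ at $\lambda=1$. Let $R$ be the unique positive solution of $R(1+R)^{d-1}=1$, the fixed point of the occupation-ratio recursion $R\mapsto (1+R)^{-(d-1)}$ for a rooted subtree whose root has $d-1$ children. Setting $\alpha=1/(1+(1+R)^{d})$, a short manipulation using $R(1+R)^{d-1}=1$ gives $\tfrac{\alpha}{1-\alpha}=(1+R)^{-d}$ and $\tfrac{1-2\alpha}{1-\alpha}=(1+R)^{-1}$, so $\alpha$ is exactly the root of $\tfrac{\alpha}{1-\alpha}=\big(\tfrac{1-2\alpha}{1-\alpha}\big)^{d}$ in $[0,1/2]$; moreover $\alpha$ and $1-\alpha$ are the occupation and vacancy probabilities of a vertex of $T_d$, and $(0,\alpha,\alpha,1-2\alpha)$ are the corresponding edge marginals on the states $11,10,01,00$ (writing $1$ for occupied, $0$ for empty). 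Substituting the vertex marginal $(\alpha,1-\alpha)$ and this edge marginal into the per-vertex Bethe free energy $\tfrac1n\log Z=(1-d)\,H(\alpha,1-\alpha)+\tfrac d2\,H(0,\alpha,\alpha,1-2\alpha)$ (with the convention $0\log 0=0$), and simplifying the two entropies with the fixed-point equation, collapses everything to $\tfrac1n\log Z_d(1)=\tfrac12\log\tfrac{(1-\alpha)^{d-1}}{\alpha}$. Combined with the first paragraph this yields $i(H)^{1/n}\ge Z_d(1)=\big(\tfrac{(1-\alpha)^{d-1}}{\alpha}\big)^{1/2}$, i.e.\ the claimed bound.

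Since the conceptual work is entirely in the cited results, the only place that needs care is the bookkeeping in this last step: keeping the definition of $\alpha$ aligned between the recursion and the statement, handling the $0\log 0$ term, and getting the Bethe coefficients $1-d$ and $\tfrac d2$ right. A convenient safeguard is to check the final formula in the two directly computable cases. For $d=1$, $H$ is a perfect matching, $\alpha=\tfrac13$, and the bound reads $i(H)\ge 3^{n/2}$, which holds with equality. For $d=2$, $H$ is a disjoint union of even cycles, $\tfrac{(1-\alpha)^{d-1}}{\alpha}=\alpha+\alpha^{-1}-2=\phi^{2}$ with $\phi=\tfrac{1+\sqrt5}{2}$, and since $i(C_{2m})=\phi^{2m}+\phi^{-2m}>\phi^{2m}$ (Claim~\ref{claim:indset_cycle}) the bound $i(H)\ge\phi^{n}$ again holds; both are consistent with the general formula.
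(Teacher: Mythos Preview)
Your proposal is correct and follows exactly the route the paper indicates: the paper does not give its own proof here but simply records that the statement is the $\lambda=1$ specialization of Theorem~8.1 of \cite{Csikvari} (itself resting on \cite{Ruozzi}), and you carry this out, supplying the Bethe free energy computation that identifies the constant $Z_d(1)=\big((1-\alpha)^{d-1}/\alpha\big)^{1/2}$. One small slip in your $d=2$ sanity check: the identity $\tfrac{(1-\alpha)^{d-1}}{\alpha}=\alpha+\alpha^{-1}-2$ holds for $d=3$ (as in Remark~2), not for $d=2$; for $d=2$ one has $\tfrac{1-\alpha}{\alpha}=\alpha^{-1}-1=\phi^{2}$, so your conclusion there is still right.
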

	
	Notice we make use of this result for the $3$-regular graph $\HH_A$. When $d=3$, the unique real solution of $x (1-x)^2 = (1-2x)^3$ is $\alpha \approx 0.24109 \dots$, which implies $i(\HH_A) \ge (2.38898 \dots)^{n}$.

	Now, we restrict ourselves to the case $A=\{0,1,x\}$ with $x\in \{-1,2,\frac{p+1}{2}\}$. We will use the same notation as in Claim \ref{claim:c4}, i.e.,  we write $(Y,t)$ for the vertex corresponding to $t \in \Zp$ in $B$ and, similarly, $(Z,t)$ for $t \in \Zp$ in $C$. In the next claims, we will see that the link graphs $\HH_A$ are isomorphic for $x\in \{-1,2,\frac{p+1}{2}\}$ and then we may assume, without loss of generality, that we have $A=\{-1,0,1\}$ and count the independent sets $i(\HH_A)$ of the link graph $\HH_A$.
	
	\begin{claim}
		The three graphs $\HH_A$ are isomorphic when $A=\{0,1,x\}$ and $x\in \{-1,2,\frac{p+1}{2}\}$.
	\end{claim}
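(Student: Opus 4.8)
The plan is to exhibit explicit group automorphisms (or affine maps) of $\Zp$ that, when applied coordinatewise to the parts $X$, $Y$, $Z$ of $\HH$, carry the edge set of one link graph onto another. Recall that an edge of $\HH_A$ for $A = \{0,1,x\}$ is a pair $\{(Y,t),(Z,t+a)\}$ with $a \in A$; so the link graph is really encoded by the \emph{difference set} $A$ acting on $\Zp$. The key observation is that translating $A$ or multiplying $A$ by a unit of $\Zp$ produces an isomorphic link graph, because these operations are realized by relabelling the vertices of $Y$ and $Z$.

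First I would record the two elementary moves. \textbf{(1) Affine shift.} For any $s \in \Zp$, the map $(Y,t) \mapsto (Y,t)$, $(Z,t)\mapsto (Z, t - s\cdot 1)$ is a bijection $V_2 \to V_2$ — no wait, more cleanly: the link graphs of $A$ and of $A + \{s\}$... actually the cleanest statement is that $\HH_A \cong \HH_{A'}$ whenever $A' = uA + v$ for a unit $u$ and any $v$, via $(Y,t)\mapsto(Y, ut + w_1)$, $(Z,t)\mapsto(Z, ut + w_2)$ for appropriately chosen $w_1, w_2$; one checks $x + y = z$ with $x \in A$ transforms into $x' + y' = z'$ with $x' \in A'$ under this substitution. \textbf{(2)} Then it suffices to show that the three sets $\{0,1,-1\}$, $\{0,1,2\}$, and $\{0,1,\tfrac{p+1}{2}\}$ are all equivalent to one another under the action $A \mapsto uA + v$. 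Indeed $\{0,1,2\} = \{-1,0,1\} + \{1\}$ handles the first two. For the third, note $\{0,1,\tfrac{p+1}{2}\}$: multiply by $2$ to get $\{0,2,p+1\} = \{0,2,1\} = \{0,1,2\}$ in $\Zp$, and we are back to the previous case. So all three are affinely equivalent to $\{-1,0,1\}$, whence the three link graphs are isomorphic, and we may assume $A = \{-1,0,1\}$.

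The one routine verification to carry out carefully is that the substitution in move (1) genuinely induces a graph isomorphism — i.e. that it sends edges to edges and non-edges to non-edges bijectively, which reduces to: $a + y = z$ for some $a \in A$ if and only if $a' + y' = z'$ for some $a' \in A'$, where $y' = uy + w_1$, $z' = uz + w_2$, $a' = ua + v$ with $v = w_2 - w_1$ (so that $a' + y' = u(a+y) + w_1 + v = uz + w_2 = z'$). Since $u$ is a unit this equivalence is immediate in both directions, and bijectivity on $V_1$ and $V_2$ is clear. I do not expect any obstacle here; the only thing to be a little careful about is bookkeeping which of $Y$, $Z$ gets which shift, and to note that $2$ is indeed a unit in $\Zp$ (true since $p$ is an odd prime — the statement is vacuous or trivial for $p = 2$, and for $p = 3$ one checks $\tfrac{p+1}{2} = 2 = -1$ directly so there is nothing to prove). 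After this the proof is complete, and the subsequent analysis proceeds with $A = \{-1,0,1\}$ fixed.
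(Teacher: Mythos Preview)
Your proof is correct and takes essentially the same approach as the paper: both produce explicit affine relabellings $(Y,t)\mapsto(Y,ut+w_1)$, $(Z,t)\mapsto(Z,ut+w_2)$ of the two parts, with the paper writing out the maps $\varphi$ (corresponding to $u=2^{-1}=(p+1)/2$) and $\theta$ (corresponding to $u=1$, translation by $1$) directly, while you first isolate the general principle that $\HH_A\cong\HH_{uA+v}$ and then observe that the three sets are affinely equivalent. Your packaging is slightly more conceptual, but the underlying isomorphisms are the same.
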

	
	\begin{proof}
		Consider the function $\varphi$ given by  
		$$ \varphi((X,j)) = 
		\begin{cases}
			\left( Y,\frac{j(p+1)}{2} \right) & \text{ if } X=Y \\
			\left( Z,\frac{(j+1)(p+1)}{2} \right) & \text{ if } X=Z
		\end{cases}
		$$
		with the second coordinate taken modulo $p$.  
		
		The function $\varphi$ is a graph isomorphism between the graph obtained for $x=-1$ and the one from $x=(p+1)/2$, since it sends $(Y,j)$ to $(Y,j \cdot \frac{p+1}{2})$ and the neighbors $(Z,j-1), (Z,j), (Z,j+1)$ of $(Y,j)$ for $x=-1$ to $(Z,j \cdot \frac{p+1}{2}), (Z,j \cdot \frac{p+1}{2} + \frac{p+1}{2}), (Z,j\cdot \frac{p+1}{2} + 1)$, neighbors of $(Y,j\cdot \frac{p+1}{2})$ for $x=\frac{p+1}{2}$.
		
		Consider the function $\theta$ given by
		$$ \theta((X,j)) = 
		\begin{cases}
			( Y,j ) & \text{ if } X=Y \\
			( Z,j+1 ) & \text{ if } X=Z
		\end{cases}
		$$
		with the second coordinate taken modulo $p$. Similarly, $\theta$ is a graph isomorphism between the graph obtained for $x=-1$ and the one from $x=2$. 
	\end{proof}
	
	Similarly to Claim \ref{claim:indset_cycle}, we can use recursion to count the independent sets in the graph $\HH_A$ when $A=\{-1,0,1\}$. We obtain the following Claim, which
	we prove, using generating function, in the Appendix.

	\begin{claim} \label{claim:indset_x=-1}
		Let $A=\{-1,0,1\}$. For $p \ge 3$, the link graph $\HH_A$ has 
		$$i(\HH_A) = (1+\sqrt{2})^p+(1-\sqrt{2})^p + 1.$$
	\end{claim}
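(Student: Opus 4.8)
The plan is to identify the graph $\HH_A$ explicitly when $A=\{-1,0,1\}$, recognize it as a familiar circulant-type structure, and then set up a transfer-matrix / linear recurrence whose associated generating function has denominator related to $1-2x-2x^2$ (so that the characteristic roots are $1\pm\sqrt2$), plus a correction accounting for the cyclic boundary condition that contributes the constant $+1$.

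First I would describe $\HH_A$ concretely. Writing $(Y,t)$ and $(Z,t)$ for the two sides, the edges are $\{(Y,t),(Z,t-1)\}$, $\{(Y,t),(Z,t)\}$, $\{(Y,t),(Z,t+1)\}$ for each $t\in\Zp$; equivalently, $(Z,s)$ is adjacent to $(Y,s-1),(Y,s),(Y,s+1)$. This is a $3$-regular bipartite graph on $2p$ vertices with a natural cyclic symmetry. The key combinatorial move is to list the vertices in the cyclic order $(Y,0),(Z,0),(Y,1),(Z,1),\ldots,(Y,p-1),(Z,p-1)$ and observe that an independent set, viewed along this cycle, is a $0/1$ sequence in which the forbidden local patterns are exactly those encoding an edge of $\HH_A$. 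One then checks which pairs at cyclic distance $1,2,3,\ldots$ are edges: $(Y,t)$–$(Z,t)$ are at distance $1$; $(Z,t)$–$(Y,t+1)$ at distance $1$; and $(Y,t)$–$(Z,t-1)$, $(Y,t)$–$(Z,t+1)$ are at distance $3$ in this ordering. So $\HH_A$ is (essentially) the circulant-like graph on $\Z/2p\Z$ with connection set $\{\pm1,\pm3\}$ restricted to the bipartition, and counting independent sets becomes counting binary necklaces of length $2p$ avoiding $1$'s at distances $1$ and $3$.

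Next, I would set up the recursion. Introduce the transfer matrix on states recording the last three bits of the sequence (enough to detect both the distance-$1$ and distance-$3$ constraints), obtaining a matrix $M$ of modest size; then $i(\HH_A) = \operatorname{tr}(M^{2p})$ up to the usual care with the cyclic closure. Equivalently — and this matches the "generating function" phrasing promised for the appendix — let $g(x)=\sum_{n} f_n x^n$ where $f_n$ counts independent sets of the corresponding \emph{path} version on $n$ vertices; the local constraints yield a rational $g(x)$ whose denominator factors to reveal the roots $1+\sqrt2$ and $1-\sqrt2$ (note $(1+\sqrt2)(1-\sqrt2)=-1$ and $(1+\sqrt2)+(1-\sqrt2)=2$, so these are roots of $t^2-2t-1$, i.e.\ of $x^2+2x-1$ after the substitution $x\mapsto 1/t$). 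The cyclic count is then obtained from the linear one by the standard "$\operatorname{tr}$ versus sum of eigenvalue powers" adjustment, which produces the third eigenvalue $1$ (coming from a root $t=1$ of the characteristic polynomial in the cyclic transfer matrix) and hence the extra $+1$; the formula $i(\HH_A)=(1+\sqrt2)^p+(1-\sqrt2)^p+1$ should then fall out after simplifying $\operatorname{tr}(M^{2p})$ in terms of these three eigenvalues.

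The main obstacle I anticipate is bookkeeping the cyclic boundary condition correctly: the distance-$3$ constraint means a naive "path then glue endpoints" argument must track enough boundary data (at least the first three and last three bits), so one must either use a large enough transfer matrix or carefully inclusion–exclude over the wrap-around patterns. A secondary subtlety is small-$p$ degeneracy: when $p=3$ the two "distinct" distance-$3$ neighbors $(Z,t-1)$ and $(Z,t+1)$ may coincide or overlap with distance-$1$ neighbors, so the structure of $\HH_A$ (and possibly the answer) must be checked directly in that base case, and indeed the claim is only asserted for $p\ge 3$. Once the transfer matrix is pinned down and its characteristic polynomial is shown to be $(t-1)(t^2-2t-1)$ times harmless factors, everything else is routine algebra, so I would relegate the explicit generating-function computation to the appendix exactly as the paper proposes.
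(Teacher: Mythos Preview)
Your transfer-matrix idea is a genuinely different route from the paper's proof, and in principle a much cleaner one: the paper argues by repeatedly deleting a boundary vertex and its neighbours, introduces four auxiliary families $G_p,H_p,G_p^*,H_p^*$ with counts $a_p,b_p,c_p,d_p$, derives two pairs of coupled linear recursions, and solves each pair by generating functions before reassembling $i(\HH_A)=2a_p+c_p$. A transfer matrix over the $p$ ``cells'' $\{(Y,t),(Z,t)\}$ would replace all of that by a single $3\times 3$ matrix whose $p$-th trace is the answer.

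However, your concrete identification of $\HH_A$ is wrong, and if carried through it would give the wrong count. In your cyclic ordering $(Y,0),(Z,0),(Y,1),(Z,1),\dots$, the edge $(Y,t)\text{--}(Z,t-1)$ sits at positions $2t$ and $2t-1$, hence at distance~$1$, not~$3$; it is the same edge you already listed as $(Z,s)\text{--}(Y,s+1)$. So every distance-$1$ pair is an edge, but only \emph{half} of the distance-$3$ pairs are: the pair (even position $2t$, position $2t+3$) is an edge, while (odd position $2t+1$, position $2t+4$) is not. Consequently $\HH_A$ is not the circulant $C_{2p}(\{\pm1,\pm3\})$ (which is $4$-regular), and ``counting binary necklaces of length $2p$ avoiding $1$'s at distances $1$ and $3$'' undercounts $i(\HH_A)$ for $p\ge 5$. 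A bit-level transfer matrix would have to be parity-aware, which spoils the tidy $(t-1)(t^2-2t-1)$ picture you sketched.

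The clean fix is to take the period to be $p$, not $2p$: let the state at time $t$ be the subset of $\{(Y,t),(Z,t)\}$ in the independent set. The within-cell edge forbids $\{Y,Z\}$, leaving three states $\emptyset,\{Y\},\{Z\}$; the only between-cell constraints are that $(Y,t)\in I$ forbids $(Z,t+1)\in I$ and $(Z,t)\in I$ forbids $(Y,t+1)\in I$. This gives
\[
T=\begin{pmatrix}1&1&1\\1&1&0\\1&0&1\end{pmatrix},\qquad
\det(\lambda I-T)=(\lambda-1)(\lambda^2-2\lambda-1),
\]
so $i(\HH_A)=\operatorname{tr}(T^p)=(1+\sqrt2)^p+(1-\sqrt2)^p+1$ immediately, with no boundary bookkeeping and no small-$p$ degeneracy beyond $p\ge2$. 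That is the argument your proposal is reaching for; once the cell structure is set up correctly the claimed eigenvalues and the $+1$ from $\lambda=1$ fall out exactly as you predicted.
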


	For the polynomial terms $Q_{3,1}(p)$ and $Q_{3,2}(p)$ in Theorem \ref{thm:main}, note that if we choose $2$ elements in $A$ we have $3$ possible options for the third element to form a graph isomorphic to the one with $A=\{-1,0,1\}$. We count each triplet $3$ times this way, so we get $Q_{3,1}(p)=3\binom{p}{2}=3 \cdot \frac{3}{p-2} \cdot \binom{p}{3}$ possible sets corresponding to link graphs isomorphic to $\HH_A$ with $A=\{-1,0,1\}$. The number of remaining 3-sets is $Q_{3,2}(p)=3 \binom{p}{3} - Q_{3,1}(p) = 3 \cdot \frac{p-5}{p-2} \cdot \binom{p}{3}$.
	
	In total, the number of triplets with $\min \{ |A|, |B|, |C| \} = 3$ is bounded from above by
	\begin{equation}\label{eq:caseZp,k=3,upper}
		Q_{3,1}(p) \cdot (\sqrt{2}+1)^{p} + Q_{3,2}(p) \cdot 458^{p/7}  + O(p^{6} 2^{p}),
	\end{equation}
	and from below by
	\begin{equation}\label{eq:caseZp,k=3,lower}
		Q_{3,1}(p) \cdot (\sqrt{2}+1)^{p} + Q_{3,2}(p) \cdot (2.38898\dots)^{p} + O(p^{6} 2^{p}).
	\end{equation}

	\section{Small case: $4\le k \le \varepsilon p$} \label{sec:smallcase}
	
	To estimate the triplets with $\min \{ |A|, |B|, |C| \} = k$ for $k \ge 4$ we use the following result. 
	
	\begin{theorem}[Kahn, \cite{Kahn}] \label{thm:kahn}
		If $H$ is a bipartite $d$-regular graph on $n$ vertices, then
		$$i(H) \le i(K_{d,d})^{n/2d} = (2^{d+1}-1)^{n/2d}.$$
	\end{theorem}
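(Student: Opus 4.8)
The plan is to run Kahn's entropy argument. I would begin by letting $I$ be a uniformly random independent set of $H$, so that $\log_2 i(H) = \mathbb{H}(I)$, the base-$2$ Shannon entropy of $I$; it then suffices to prove $\mathbb{H}(I) \le \frac{n}{2d}\log_2(2^{d+1}-1)$. Write the bipartition as $V(H) = U \cup W$; since $H$ is $d$-regular and bipartite, $d|U| = e(H) = d|W|$, so $|U| = |W| = n/2$. Put $X = I \cap U$ and $Y = I \cap W$ and split $\mathbb{H}(I) = \mathbb{H}(X) + \mathbb{H}(Y \mid X)$. Both terms get estimated in terms of the ``availability probabilities'' $\phi_w := \Pr[\,N(w) \cap I = \emptyset\,]$ for $w \in W$.

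For the conditional term, I would observe that once $X$ is revealed, $Y$ can be an arbitrary subset of $F(X) := \{w \in W : N(w)\cap X = \emptyset\}$ and nothing more, and every such choice is again independent; hence, conditionally on $X$, the indicators $(\mathbf{1}[w\in I])_{w\in W}$ are independent, uniform on $F(X)$ and identically zero off it. Thus $\mathbb{H}(Y \mid X = x) = |F(x)|$, and averaging gives $\mathbb{H}(Y\mid X) = \sum_{w\in W}\Pr[w\in F(X)] = \sum_{w\in W}\phi_w$. For $\mathbb{H}(X)$, the key point is that every vertex of $U$ lies in exactly $d$ of the neighborhoods $\{N(w)\}_{w\in W}$ (as $H$ is $d$-regular), so Shearer's entropy inequality yields $d\,\mathbb{H}(X) \le \sum_{w\in W}\mathbb{H}(X\cap N(w)) = \sum_{w\in W}\mathbb{H}(I\cap N(w))$. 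Writing $B_w = \mathbf{1}[\,I\cap N(w)=\emptyset\,]$ and conditioning, $\mathbb{H}(I\cap N(w)) = \mathbb{H}(B_w) + \mathbb{H}(I\cap N(w)\mid B_w) \le h(\phi_w) + (1-\phi_w)\log_2(2^d-1)$, where $h(t) = -t\log_2 t -(1-t)\log_2(1-t)$, because given $B_w=0$ the variable $I\cap N(w)$ is supported on the $2^d-1$ nonempty subsets of the $d$-set $N(w)$.

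Putting the pieces together, $\mathbb{H}(I) \le \sum_{w\in W}\big(\tfrac1d h(\phi_w) + \tfrac{1-\phi_w}{d}\log_2(2^d-1) + \phi_w\big)$, so it remains to maximize the summand over $\phi\in[0,1]$. Setting $g(\phi) = h(\phi) + (1-\phi)\log_2(2^d-1) + d\phi$, a short computation gives $g'(\phi) = \log_2\frac{(1-\phi)2^d}{\phi(2^d-1)}$ and $g''(\phi)<0$, so the unique maximum is attained at $\phi^\ast = 2^d/(2^{d+1}-1)$, where one checks $g(\phi^\ast) = \log_2(2^{d+1}-1)$. Hence $\mathbb{H}(I) \le \frac1d\cdot\frac n2\log_2(2^{d+1}-1) = \frac{n}{2d}\log_2(2^{d+1}-1)$, that is $i(H)\le(2^{d+1}-1)^{n/2d}$; and since the independent sets of $K_{d,d}$ are precisely the subsets of one of its two sides (with the empty set counted once), $i(K_{d,d}) = 2^{d+1}-1$ on $2d$ vertices, matching the claimed $i(K_{d,d})^{n/2d}$.

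I expect the delicate steps to be setting up Shearer's inequality with the correct covering multiplicity $d$ (this is exactly what produces the factor $\tfrac1d$ in front of $\mathbb{H}(X)$, and hence the ``per-vertex'' bound) and the elementary but slightly fiddly single-variable optimization of $g$, including verifying concavity so that the interior critical point is the global maximum. If one also wants the equality characterization (not needed for the stated inequality), one traces equality back: it forces $\phi_w \equiv \phi^\ast$, equality in Shearer's inequality, and $I\cap N(w)$ uniform on nonempty subsets of $N(w)$ whenever nonempty, which on a connected graph pins down $H\cong K_{d,d}$, so in general $H$ is a disjoint union of copies of $K_{d,d}$.
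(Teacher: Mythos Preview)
The paper does not prove this statement at all: Theorem~\ref{thm:kahn} is quoted from Kahn's paper~\cite{Kahn} as a black box and is simply applied in Section~\ref{sec:smallcase} to bound $i(\HH_A)$ for $|A|\ge 4$. There is thus no ``paper's own proof'' to compare against.

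That said, your sketch is a faithful and correct rendition of Kahn's original entropy argument. The chain-rule split $\mathbb{H}(I)=\mathbb{H}(X)+\mathbb{H}(Y\mid X)$, the exact evaluation $\mathbb{H}(Y\mid X)=\sum_w\phi_w$ (which uses that, conditionally on $X$, the set $Y$ is uniform over all subsets of $F(X)$ since the graph is bipartite), the application of Shearer's inequality with covering multiplicity exactly $d$, the bound $\mathbb{H}(I\cap N(w))\le h(\phi_w)+(1-\phi_w)\log_2(2^d-1)$, and the concave single-variable optimization of $g$ are all correct as written; the computation $g(\phi^\ast)=\log_2(2^{d+1}-1)$ at $\phi^\ast=2^d/(2^{d+1}-1)$ checks out. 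The verification $i(K_{d,d})=2^{d+1}-1$ is also right. Your remarks about the equality case go beyond what the paper needs, but are accurate in outline.
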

	
	Hence, the number of independent sets in a 4-regular bipartite graph on $2p$ vertices is at most $31^{p/4}$, and in a $k$-regular bipartite graph on $2p$ vertices for $k\ge 5$ is at most $63^{p/5}$.
	
	Therefore, the number of triplets for which $4\le k \le \varepsilon p$ is at most 
	$$ 3\binom{p}{4} 31^{p/4} + \sum_{5\le k \le \varepsilon p}  3 \binom{p}{k} \cdot 63^{p/5} \le 3\binom{p}{4} 31^{p/4} + 3 \cdot 63^{p/5} \cdot \sum_{k \le \varepsilon p}  \binom{p}{k}.$$
	
	\noindent For the sum $\sum\limits_{k \le \varepsilon p}  \binom{p}{k}$, we use the standard binomial estimates $\binom{n}{k} \le (en/k)^k$ to obtain 
	$$\sum\limits_{k \le \varepsilon p}  \binom{p}{k} \le 2 \cdot \left( e^{\varepsilon (1 + \log(1/\varepsilon))} \right)^p .$$
	
	\noindent We conclude that, for $\varepsilon < 1/10^3$, the number of triplets with $4 \le \min \{ |A|, |B|, |C| \} \le \varepsilon p$ is at most  
	\begin{equation}\label{eq:caseZp,ksmall}
		\left( 3\binom{p}{4} +o(1) \right) 31^{p/4} .
	\end{equation}
	
	\section{Big case: $k \ge \varepsilon p$} \label{sec:bigcase}
	
	To count the triplets $(A,B,C)$ with $|A|,|B|,|C| \ge \varepsilon p$, we use the method of hypergraph containers developed by Balogh, Morris, and Samotij \cite{balogh2015independent}, and, independently, by Saxton and Thomasson \cite{saxton2015hypergraph}. Since its development, the method has found several applications, in particular for counting sum-free sets \cite{balogh2015sharp}. We highlight that a container theorem for sum-free subsets of abelian groups was found earlier by Green and Ruzsa (see Proposition 2.1 of \cite{Green-Ruzsa}) using Fourier analysis. Here, we need a 3-partite variant of their result, hence we make use of the container lemma for general hypergraphs.
	
	Throughout this section, we will state and prove the results for a general abelian group $G$ and at the end of the section, we discuss the differences when $G=\Zp$.
	
	\subsection{Supersaturation}
	
	To use the container method, we need the supersaturation result (Proposition \ref{prop:ssat2}) below. We highlight that Propositions \ref{prop:ssat2} and \ref{prop:ssat1} are very similar in essence to Remark 20 of \cite{Semchankau}. 
	
	\begin{definition}
		Given $A,B\subseteq G$ with $G$ an abelian group and given $\varepsilon\in (0,1)$, we write $A+_{\varepsilon}B$ for the set of $x\in G$ having at least $\varepsilon |G|$ representations as a sum $a+b=x$ with $a\in A$ and $b\in B$.  
	\end{definition}
	
	As in \cite{Semchankau}, we obtain a supersaturation result as a corollary of the following Lemma.
	
	\begin{lemma}[Theorem 19 in \cite{Semchankau}]\label{lem:ssat2}
		Let $G$ be an abelian group, and $A, B \subset G$ be sets. Let $\varepsilon \in (0,1)$ be such that $\sqrt{\varepsilon} |G| < |A|, |B|$. Let $H$ be a maximal proper subgroup of $G$. Then 
		$$ |A+_{\varepsilon} B| \ge \min\{ |G|, |A|+|B|-|H| \} -3 \sqrt{\varepsilon} |G| .$$
	\end{lemma}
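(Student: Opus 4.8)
The plan is to deduce this from Kneser's theorem together with an averaging estimate, using the hypothesis $\sqrt{\varepsilon}|G|<|A|,|B|$ to control the non-popular part of the sumset. Write $N=|G|$, let $r(x)=|\{(a,b)\in A\times B:a+b=x\}|$, and set $S=A+_{\varepsilon}B=\{x:r(x)\ge\varepsilon N\}$; then $\mathrm{Bad}:=(A+B)\setminus S=\{x:1\le r(x)<\varepsilon N\}$ and $|S|=|A+B|-|\mathrm{Bad}|$, so it suffices to prove $|\mathrm{Bad}|\le |A+B|-\min\{N,\,|A|+|B|-|H|\}+3\sqrt{\varepsilon}N$.

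I would first record two inputs. (a) An averaging bound: at most $N$ values of $x$ lie outside $S$, each with $r(x)<\varepsilon N$, so $\sum_{x\notin S}r(x)<\varepsilon N^{2}$ and the average over $a\in A$ of $|\{b\in B:a+b\notin S\}|$ is $\tfrac{1}{|A|}\sum_{x\notin S}r(x)<\varepsilon N^{2}/|A|<\sqrt{\varepsilon}N$; choosing $a_{0}\in A$ below this average gives $|S|\ge|S\cap(a_{0}+B)|>|B|-\sqrt{\varepsilon}N$, and symmetrically $|S|>|A|-\sqrt{\varepsilon}N$, hence $|S|>\max\{|A|,|B|\}-\sqrt{\varepsilon}N$. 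This proves the claim whenever $\min\{|A|,|B|\}\le|H|+2\sqrt{\varepsilon}N$, so I may assume $|A|,|B|>|H|$. (b) Kneser's theorem for $A+B$: with $K$ the stabilizer of $A+B$ one has $A+B=(A+K)+(B+K)$ and $|A+B|=|A+K|+|B+K|-|K|$; if $A+B=G$ then $r(x)\ge|A|+|B|-N$ for every $x$, which with (a) settles this case, and otherwise $K$ is proper, so $|K|\le|H|$ and $|A+B|\ge|A|+|B|-|H|$.

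For the main estimate I would pass to the quotient $G/K$ and split $A,B$ over the cosets of $K$. The key point is that, writing $\alpha_{\bar a}=|A\cap\pi^{-1}(\bar a)|/|K|$ and likewise $\beta_{\bar b}$, the average of $r(x)$ over a $K$-coset $\pi^{-1}(\bar y)$ of $A+B$ equals $|K|\sum_{\bar a+\bar b=\bar y}\alpha_{\bar a}\beta_{\bar b}$, so such a coset contributes to $\mathrm{Bad}$ only if this weighted convolution is small, i.e.\ $\bar y$ is ``light'' for the densities $\alpha,\beta$ on $G/K$, where $\overline A+\overline B$ has trivial stabilizer; by Cauchy--Davenport and Vosper's theorem in $G/K$ the light classes are confined to the two short tails of a (near-)triangular profile, together with classes touching the $O(|A+B|-|A|-|B|+|K|)$ deficient cosets of $A$ or $B$. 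Counting these — $O(\varepsilon[G:K])$ tail classes contributing $O(\varepsilon N)\le O(\sqrt{\varepsilon}N)$ elements, plus the deficiency, which is exactly the Kneser surplus — and translating back to $G$ should yield $|\mathrm{Bad}|\le\big(|A+B|-(|A|+|B|-|H|)\big)+3\sqrt{\varepsilon}N$.

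The main obstacle I anticipate is keeping this quantitative. Two places are delicate: the trivial-stabilizer case $K=\{0\}$, where Kneser only gives $|A+B|\ge|A|+|B|-1$ and one needs a sharp Vosper/Freiman stability statement (if $|A+B|$ is within $O(\sqrt{\varepsilon}N)$ of $|A|+|B|-1$ then $A,B$ sit in short progressions and $\mathrm{Bad}$ lives in the two light tails, of size $O(\varepsilon N)$; otherwise the surplus $\ge|H|-1$ must be shown to dominate $|\mathrm{Bad}|$ by $3\sqrt{\varepsilon}N$); and the non-coset-union case, where one cannot simply discard the deficient cosets but must run the weighted convolution estimate above and verify that every error term is pushed down to scale $\sqrt{\varepsilon}N$ with total constant at most $3$. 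That last constraint is precisely what forces $\sqrt{\varepsilon}$, rather than $\varepsilon$, into the hypothesis and conclusion.
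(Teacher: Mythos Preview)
The paper does not prove this lemma at all: it is quoted verbatim as Theorem~19 of \cite{Semchankau} and used as a black box to derive Proposition~\ref{prop:ssat2}. There is therefore no ``paper's own proof'' to compare your proposal against.

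On the substance of your sketch: the averaging step (a) is correct and is indeed the natural way to pass from the hypothesis $\sqrt{\varepsilon}N<|A|,|B|$ to $|S|>\max\{|A|,|B|\}-\sqrt{\varepsilon}N$, and using Kneser to bound $|A+B|$ is the right move. But the heart of your argument, the quotient step, has a real gap. You invoke Cauchy--Davenport and Vosper in $G/K$, yet $G/K$ is an arbitrary abelian group, not $\mathbb{Z}/p\mathbb{Z}$; neither theorem is available there, and the ``near-triangular profile with two short tails'' picture you rely on is a $\mathbb{Z}/p\mathbb{Z}$ phenomenon. In a general quotient the convolution $\sum_{\bar a+\bar b=\bar y}\alpha_{\bar a}\beta_{\bar b}$ need not concentrate except for a few boundary classes, so your bound on $|\mathrm{Bad}|$ does not follow from the tools you name. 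You also left the case $A+B=G$ with $|A|+|B|<N+\varepsilon N$ unresolved: your pointwise lower bound $r(x)\ge|A|+|B|-N$ can then be below $\varepsilon N$, and the averaging input from (a) alone does not cover it once you have assumed $|A|,|B|>|H|+2\sqrt{\varepsilon}N$.

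If you want to carry this through, the standard route is a Pollard-type inequality for abelian groups (a weighted Kneser statement controlling $\sum_{t\le k}|\{x:r(x)\ge t\}|$), rather than structural stability in the quotient; that is essentially what \cite{Semchankau} does.
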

	
	\begin{proposition}\label{prop:ssat2}
		Fix $\varepsilon \in (0,1)$. Then for sufficiently large $N$ the following holds. Let $G$ be an abelian group of order $N$ and let $A, B, C \subset G$ be sets. There exists $\sigma >0$ such that if $|A|,|B|,|C| \ge \varepsilon N$ and $|A|+|B|+|C| \ge \left( \frac{3}{2} + \varepsilon \right) N$ then there are at least $\sigma N^2$ sums $x+y=z$ with $x\in A$, $y\in B$, and $z \in C$.
	\end{proposition}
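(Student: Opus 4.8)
The plan is to deduce Proposition~\ref{prop:ssat2} from Lemma~\ref{lem:ssat2} by a simple double-counting argument. First I would apply Lemma~\ref{lem:ssat2} with the given $\varepsilon$ (after possibly shrinking it, see below) to the pair $A,B$. Since $|A|,|B| \ge \varepsilon N \ge \sqrt{\varepsilon'} N$ for a suitable $\varepsilon' \le \varepsilon^2$, the hypothesis of the lemma is met, and we get
$$ |A +_{\varepsilon'} B| \ge \min\{ N, |A| + |B| - |H| \} - 3\sqrt{\varepsilon'} N, $$
where $H$ is a maximal proper subgroup of $G$. The key observation is that $|H| \le N/2$ (a maximal proper subgroup of an abelian group has index at least $2$), so using $|A|+|B|+|C| \ge (3/2 + \varepsilon)N$ and $|C| \le N$ we get $|A| + |B| \ge (1/2 + \varepsilon) N$, hence $|A| + |B| - |H| \ge \varepsilon N$. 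Therefore $|A +_{\varepsilon'} B| \ge \varepsilon N - 3\sqrt{\varepsilon'} N$, and by choosing $\varepsilon'$ small enough (e.g. $\varepsilon' \le (\varepsilon/8)^2$, so $3\sqrt{\varepsilon'} \le \varepsilon/2$) this is at least $(\varepsilon/2) N$.

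Next I would relate this to sums landing in $C$. We have $|A +_{\varepsilon'} B| \ge (\varepsilon/2)N$ and $|C| \ge \varepsilon N$, so
$$ |(A +_{\varepsilon'} B) \cap C| \ge |A +_{\varepsilon'} B| + |C| - N \ge (\varepsilon/2)N + \varepsilon N - N, $$
which need not be positive, so this crude bound is not enough; instead I would again invoke the size condition more carefully. In fact, a cleaner route: since $|A +_{\varepsilon'} B| \ge (1/2+\varepsilon)N - |H| - 3\sqrt{\varepsilon'}N \ge (1/2 + \varepsilon/2)N$ from the same inequality chain (not throwing away as much), we get $|(A +_{\varepsilon'} B) \cap C| \ge (1/2 + \varepsilon/2)N + \varepsilon N - N = (3\varepsilon/2 - 1/2)N$. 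This is still not obviously positive. The correct fix is to observe that the relevant bound to push is $\min\{N, |A|+|B|-|H|\}$: whenever $|A|+|B|-|H| \ge (1 - \varepsilon/2)N$ we can take $|A+_{\varepsilon'}B| \ge (1-\varepsilon)N$, and then $|(A+_{\varepsilon'}B)\cap C| \ge (1-\varepsilon)N + \varepsilon N - N = 0$ — borderline again. So the honest argument is: $|A|+|B| = (|A|+|B|+|C|) - |C| \ge (3/2+\varepsilon)N - |C|$, and since we also need $C$ large this forces a genuine overlap only after a small case analysis on whether $|C|$ is close to $N$ or not. I would therefore split into the case $|C| \ge (1 - \varepsilon/4)N$ (where $A +_{\varepsilon'} B$ being nonempty already suffices, as any $\ge 1$ element of a set of density $\ge 3\varepsilon/4$ intersects $C$) and the case $|C| \le (1-\varepsilon/4)N$, where $|A|+|B| \ge (1/2 + 5\varepsilon/4)N$ gives a comfortable margin $|A+_{\varepsilon'}B| \ge (1/2 + \varepsilon/2)N$ and hence $|(A+_{\varepsilon'}B)\cap C| \ge (1/2+\varepsilon/2)N + \varepsilon N - N > 0$ — wait, this is still negative for small $\varepsilon$; the resolution is that in this case $|C|$ can be taken as large as $(1-\varepsilon/4)N$ precisely when needed, so the product $|A+_{\varepsilon'}B| \cdot$ (overlap density) works out. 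The cleanest formulation, which I will use, is: let $z \in (A+_{\varepsilon'}B) \cap C$; then there are at least $\varepsilon' N$ pairs $(a,b) \in A \times B$ with $a+b = z$, so summing over all such $z$ gives at least $\varepsilon' N \cdot |(A +_{\varepsilon'}B)\cap C|$ sums, and it remains only to show this intersection is $\ge 1$, for which I take $\varepsilon'$ small and note $|A+_{\varepsilon'}B| + |C| > N$ follows once we verify $|A+_{\varepsilon'}B| \ge N - |C| + 1$; since $N - |C| \le |A| + |B| - (1/2+\varepsilon)N$ and $|A+_{\varepsilon'}B| \ge |A|+|B| - |H| - 3\sqrt{\varepsilon'}N \ge |A|+|B| - N/2 - 3\sqrt{\varepsilon'}N$, the inequality $|A|+|B| - N/2 - 3\sqrt{\varepsilon'}N \ge |A|+|B| - (1/2+\varepsilon)N + 1$ holds for $\varepsilon' \le (\varepsilon/6)^2$ and $N$ large.

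Putting this together, with $\sigma := \varepsilon' / 2$ (or any sufficiently small constant depending only on $\varepsilon$) we obtain at least $\sigma N^2$ sums $x + y = z$ with $x \in A$, $y \in B$, $z \in C$, as required. The main obstacle is purely bookkeeping: choosing $\varepsilon'$ as a function of $\varepsilon$ so that the $3\sqrt{\varepsilon'}N$ error term in Lemma~\ref{lem:ssat2} and the $|H| \le N/2$ bound together leave a positive linear-in-$N$ surplus after intersecting with $C$, and handling the boundary case where $|C|$ is very close to $N$ separately. There is no conceptual difficulty — once the constants are fixed, the inequality $|A+_{\varepsilon'}B| + |C| > N$ is immediate from $|H| \le N/2$ and the density hypothesis $|A|+|B|+|C| \ge (3/2+\varepsilon)N$.
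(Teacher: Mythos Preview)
Your approach is the same as the paper's: apply Lemma~\ref{lem:ssat2} to $A,B$ with a parameter $\varepsilon' \le \varepsilon^2$, use $|H| \le N/2$, and then bound $|(A+_{\varepsilon'}B)\cap C|$ from below via inclusion--exclusion. The paper does this in three clean lines: with $\varepsilon' = \varepsilon^2/16$ one gets
\[
|C \cap (A+_{\varepsilon'}B)| \ge |C| + |A+_{\varepsilon'}B| - N \ge \bigl(|A|+|B|+|C|\bigr) - \tfrac{3N}{2} - 3\sqrt{\varepsilon'}\,N \ge \tfrac{\varepsilon}{4}N,
\]
and hence at least $(\varepsilon^2/16)N \cdot (\varepsilon/4)N = (\varepsilon^3/64)N^2$ sums.

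Your writeup, however, contains a genuine error of logic, not just of exposition. You explicitly state that ``it remains only to show this intersection is $\ge 1$'' and then deduce $\sigma N^2$ sums with $\sigma = \varepsilon'/2$. That does not follow: an intersection of size $1$ yields only $\varepsilon' N$ sums, which is $\Theta(N)$, not $\Theta(N^2)$. You need $|(A+_{\varepsilon'}B)\cap C| \ge cN$ for some $c = c(\varepsilon) > 0$. The good news is that the inequality you actually verify at the end already gives this: from $|A+_{\varepsilon'}B| \ge |A|+|B| - N/2 - 3\sqrt{\varepsilon'}N$ and $N - |C| \le |A|+|B| - (1/2+\varepsilon)N$ you get $|(A+_{\varepsilon'}B)\cap C| \ge (\varepsilon - 3\sqrt{\varepsilon'})N$, not merely $\ge 1$. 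So drop the meandering case analysis and the ``$\ge 1$'' target, and just write the one-line inclusion--exclusion as above; that is the entire proof.
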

	
	\begin{proof}
		We know that $|A|,|B|\geq \varepsilon N > (\varepsilon/4) N$. By Lemma \ref{lem:ssat2}, we get 
		$$|A+_{\varepsilon^2/16}B|\geq |A|+|B|-|H|-\left( \frac{3\varepsilon}{4}\right) N .$$
		Since $H$ is a proper subgroup, we have $|H|\leq N/2$ and then 
		$$ |A+_{\varepsilon^2/16}B|\geq |A|+|B|-\frac{N}{2} -\left( \frac{3\varepsilon}{4}\right) N .$$ 
		We obtain
		\begin{align*}
			|C\cap (A+_{\varepsilon^2/16}B)| \geq &\  |C|+|(A+_{\varepsilon^2/16}B)|-N \ge \\
			\geq & \left( \left( \frac{3}{2}+\varepsilon \right)N-|A|-|B| \right)+\left( |A|+|B|-\frac{N}{2} -\left( \frac{3\varepsilon}{4}\right) N \right)-N = \left( \frac{\varepsilon}{4} \right) N.
		\end{align*}
		Then there are at least $(\varepsilon^3/64)N^2$ sums of the form $x+y=z$ with $x\in A$, $y\in B$ and $z\in C$. This concludes the proof of the proposition with $\sigma=\varepsilon^3/64$.
	\end{proof}

	\subsection{Containers}
	
	Now, we state the Hypergraph Container Lemma (Theorem \ref{thm:container}) and obtain the upper bound on the number of triplets $(A,B,C)$ with $|A|,|B|,|C| \ge \varepsilon p$. First, we need some definitions.
	
	\begin{definition}
		Fix a $r$-regular hypergraph $\G$ with average degree $d$; fix $v\in V(\G)$; fix $2\leq j\leq |V(\G)|$ and $\tau>0$. Define 
		
		\[
		d^{(j)}(v):=\max\{|S|: v\in A\subseteq V(\G), |S|=j\}.
		\]
		
		\noindent Define $\delta_j$ with the equation
		\[
		\delta_{j}\tau^{j-1}d|V(\G)|=\sum_{v}d^{(j)}(v).
		\]
		
		\noindent Finally, define the \textit{co-degree} function $\delta(\G,\tau)$ with 
		\[
		\delta(\G,\tau):=2^{\binom{r}{2}-1}\sum_{j=2}^{r}2^{-\binom{j-1}{2}}\delta_j.
		\]
		If $d=0$ define $\delta(\G,\tau)=0$.
	\end{definition}

	\begin{theorem}[Corollary 3.6 in \cite{saxton2015hypergraph}]\label{thm:container}
		Let $\G$ be an $r$-uniform hypergraph with vertex set $[N]$. Let $0< \varepsilon, \tau < 1/2$. Suppose that $\tau < 1/(200 \cdot r \cdot r!^2)$ and $\delta(\G, \tau) \le \varepsilon / (12r!)$. Then there exists $c=c(r) \le 1000 \cdot r \cdot r!^3$ and a collection $\C$ of vertex subsets such that 
		\begin{enumerate}
			\item[(i)] every independent set in $\G$ is a subset of some $D \in \C$;
			\item[(ii)] for every $D \in \C$, $e(\G[D]) \le \varepsilon e(\G)$;
			\item[(iii)] $\log |\C| \le cN\tau \cdot \log (1/\varepsilon) \cdot \log (1/ \tau)$.
		\end{enumerate}
	\end{theorem}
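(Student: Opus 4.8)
Since Theorem~\ref{thm:container} is quoted verbatim from Saxton and Thomason (Corollary 3.6 of \cite{saxton2015hypergraph}), there is strictly nothing new to prove; the plan is simply to recall the shape of the container-method argument that establishes it (the hypergraph case is due to \cite{saxton2015hypergraph}, and independently, for a closely related formulation, to \cite{balogh2015independent}), specialised to the uniformity $r=3$ that we actually use. The whole statement is obtained by proving a \emph{one-step container lemma} and then iterating it $O_r(\log(1/\varepsilon))$ times: the one-step lemma attaches to each independent set a small ``fingerprint'' $S\subseteq I$ and a container $C=C(S)\supseteq I$ — depending on $S$ alone — whose induced subhypergraph has a constant-factor fewer edges than $\G$, unless $e(\G)$ is already below the $\varepsilon e(\G)$ target; iterating drives the edge count down to $\varepsilon e(\G)$, and concatenating the fingerprints from the successive rounds bounds $|\C|$.

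For the one-step lemma I would fix a linear order on $V(\G)=[N]$ and run the deterministic ``container algorithm'': process the vertices in order, maintaining an \emph{available} set $\A$ (initialised to $[N]$) and a fingerprint $S$ (initialised to $\emptyset$); whenever the vertex currently being scanned lies in $I\cap\A$, add it to $S$ and then, for each $j=2,\dots,r$, delete from $\A$ every vertex of sufficiently high ``$j$-degree'' into $S$, with the thresholds calibrated to the quantities $d^{(j)}(v)$; stop once $|S|=\lceil\tau N\rceil$ (a branch one shows is degenerate under the hypotheses) or the scan ends. One then sets $C=\A\cup S$ and checks three things: (a) $C$ is a function of $S$ only, and $I\subseteq C$ because no vertex of $I$ is ever deleted on account of $I$ being independent; (b) $|S|\le\tau N$; and (c) $e(\G[C])\le(1-2^{-r})e(\G)$ whenever $e(\G)$ is not already small. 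Point (c) is where the precise definition of the co-degree function $\delta(\G,\tau)=2^{\binom{r}{2}-1}\sum_{j=2}^{r}2^{-\binom{j-1}{2}}\delta_j$ enters: each edge of $\G$ that is not ``killed'' by a deletion survives inside $C$, and a weighted averaging argument over the run (or, equivalently, over a uniformly random order) shows the expected number of surviving edges is at most $(1-2^{-r})e(\G)$ provided $\delta(\G,\tau)\le\varepsilon/(12r!)$ and $\tau<1/(200\,r\,r!^2)$; the weights $2^{-\binom{j-1}{2}}$ are exactly the book-keeping needed to combine the $r-1$ deletion rounds.

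Given the one-step lemma, the iteration is routine. Put $\G_0=\G$, $D_0=[N]$; having a container $D_i$ for $I$, apply the one-step lemma to $\G[D_i]$ to get a fingerprint $S_{i+1}\subseteq I\cap D_i$ and $D_{i+1}\subseteq D_i$ with $e(\G[D_{i+1}])\le(1-2^{-r})e(\G[D_i])$. One must check that $\G[D_i]$ still satisfies the co-degree hypothesis for the \emph{same} $\tau$ and $\varepsilon$ — this is a short monotonicity argument, since passing to an induced subhypergraph cannot increase the relevant degree maxima beyond the slack already built in. After $t=O(2^r\log(1/\varepsilon))$ rounds $e(\G[D_t])\le\varepsilon e(\G)$, which gives (ii) with $D=D_t\in\C$, and (i) is immediate because $I\subseteq D_t$ at every stage. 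The tuple $(S_1,\dots,S_t)$ determines $D$ and has total size at most $t\lceil\tau N\rceil$, so $|\C|\le\sum_{m\le t\tau N+t}\binom{N}{m}$, and the standard binomial estimate yields $\log|\C|\le cN\tau\log(1/\varepsilon)\log(1/\tau)$ for a suitable $c=c(r)\le 1000\,r\,r!^3$, which is (iii). The only genuinely hard step is the one-step lemma — concretely, showing that the deletion rule forces the constant-factor drop in $e(\G[C])$ while keeping $|S|\le\tau N$; everything after that (the iteration, the inheritance of the hypotheses, and the count of fingerprints) is bookkeeping.
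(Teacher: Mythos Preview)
Your assessment is correct: the paper does not prove Theorem~\ref{thm:container} at all; it is quoted as a black box from \cite{saxton2015hypergraph} and then applied inside the proof of Claim~\ref{claim:generalbigcase}. Your sketch of the one-step-plus-iteration container argument is a reasonable recap of the Saxton--Thomason proof, and goes well beyond what the paper itself supplies for this statement.
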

	
	\begin{claim} \label{claim:generalbigcase}
		Fix $\varepsilon \in (0,1)$. Then for sufficiently large $N$, the number of triplets $A,B,C \subset G$ with $\min \{ |A|, |B|, |C| \} \ge \varepsilon N$ is at most $2^{\left( \frac{3}{2} + 2\varepsilon \right) N}$.
	\end{claim}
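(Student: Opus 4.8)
The plan is to apply the Hypergraph Container Lemma (Theorem \ref{thm:container}) to the auxiliary hypergraph $\HH$ (now built over the general group $G$), combined with the supersaturation estimate in Proposition \ref{prop:ssat2}. First I would restrict attention to triplets $(A,B,C)$ with $\min\{|A|,|B|,|C|\} \ge \varepsilon N$ and $|A|+|B|+|C| \ge (3/2+\varepsilon)N$; the triplets violating the second inequality are at most the number of ways to pick three sets of total size below $(3/2+\varepsilon)N$, which is crudely bounded by $2^{(3/2+\varepsilon)N} \cdot \mathrm{poly}(N)$ (sum of $\binom{3N}{\le (3/2+\varepsilon)N}$), comfortably below $2^{(3/2+2\varepsilon)N}$. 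So the real work is the regime where Proposition \ref{prop:ssat2} applies and gives $\sigma N^2$ edges of $\HH$ inside $A\cup B\cup C$.

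Next I would verify the hypotheses of Theorem \ref{thm:container} for $\G = \HH$: here $r = 3$, the hypergraph is $3$-partite $3$-uniform with $|V(\HH)| = 3N$, and it is $\deg$-regular with every vertex of degree exactly $N$ (each $x \in X$ lies in exactly $N$ edges $x+y=z$, etc.), so $d = N$ and $e(\HH) = N^2$. The co-degrees are small and uniform: for any two vertices, the number of edges containing both is at most $1$ (two of the three coordinates determine the third), so $d^{(2)}(v) \le N$ and $d^{(3)}(v) \le 1$. A direct computation then shows $\delta(\HH, \tau) = O(\tau) + O(1/(\tau^2 N))$, so choosing $\tau$ a suitably small constant (depending only on $\varepsilon$ and $r=3$) and $N$ large makes $\delta(\HH,\tau) \le \varepsilon'/(12 \cdot 3!)$ for the value $\varepsilon' = \sigma/2$ (or any constant fraction of the supersaturation density $\sigma = \sigma(\varepsilon)$ from Proposition \ref{prop:ssat2}) we feed into the container lemma. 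The lemma then yields a collection $\C$ of containers with $|\C| \le 2^{O_\varepsilon(N\tau \log^2 N)} = 2^{o(N)}$, such that every independent set of $\HH$ sits inside some $D \in \C$, and each $D$ spans at most $\varepsilon' e(\HH) = (\sigma/2) N^2$ edges.

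The punchline: fix a container $D = D_X \cup D_Y \cup D_Z$. If all three of $|D_X|,|D_Y|,|D_Z| \ge \varepsilon N$ and $|D_X|+|D_Y|+|D_Z| \ge (3/2+\varepsilon)N$, then Proposition \ref{prop:ssat2} would force at least $\sigma N^2$ edges inside $D$, contradicting property (ii) of the container. Hence every such container has $|D_X|+|D_Y|+|D_Z| < (3/2+\varepsilon)N$ (or fails the $\min \ge \varepsilon N$ condition, which is even more restrictive for the triplets we care about), so the number of independent sets contained in a single $D$ — i.e., the number of sub-triplets $(A,B,C)$ with $A \subseteq D_X$ etc. — is at most $2^{|D_X|+|D_Y|+|D_Z|} \le 2^{(3/2+\varepsilon)N}$. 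Summing over the at most $2^{o(N)} = 2^{\varepsilon N}$ containers gives the bound $2^{(3/2+\varepsilon)N} \cdot 2^{\varepsilon N} = 2^{(3/2+2\varepsilon)N}$, as claimed (absorbing the polynomial factors and the boundary-case count into the $2^{\varepsilon N}$ slack, valid for $N$ large; small $N$ contribute a constant).

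The main obstacle I anticipate is purely bookkeeping: checking the co-degree condition $\delta(\HH,\tau)$ carefully enough to pin down an admissible $\tau = \tau(\varepsilon)$ and the implied threshold on $N$, and making sure the various constants ($\sigma$ from supersaturation, $\varepsilon'$ fed to the container lemma, $c(r)$, and the $\tau < 1/(200 r\, r!^2)$ requirement) are chosen in a consistent order — $\varepsilon \mapsto \sigma \mapsto \varepsilon' \mapsto \tau \mapsto N_0$ — so that no circularity creeps in. The structural heart of the argument (supersaturation forbids large containers; few containers) is short; the delicate part is confirming that the Saxton–Thomasson co-degree function is genuinely $o(1)$ for $\HH$ with a constant $\tau$, which hinges on the pairwise co-degree of $\HH$ being at most $1$.
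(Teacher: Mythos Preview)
Your core strategy---apply Theorem~\ref{thm:container} to $\HH$ and use Proposition~\ref{prop:ssat2} to force every relevant container to have at most $(3/2+\varepsilon)N$ vertices---is exactly the paper's argument. Two places need repair.

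First, the preliminary split is both unnecessary and wrong. You claim the number of triplets with $|A|+|B|+|C|<(3/2+\varepsilon)N$ is at most $\sum_{k\le (3/2+\varepsilon)N}\binom{3N}{k}\le 2^{(3/2+\varepsilon)N}\cdot\mathrm{poly}(N)$, but $(3/2+\varepsilon)N$ exceeds the middle index $3N/2$, so that sum is of order $2^{3N}$, not $2^{(3/2+\varepsilon)N}$. Fortunately you do not need this step at all: the container argument you outline afterwards already captures \emph{every} independent set, and for any container $D$ with $\min(|D_X|,|D_Y|,|D_Z|)\ge\varepsilon N$ you correctly deduce $|D|<(3/2+\varepsilon)N$ from supersaturation. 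Simply drop the split and run the container argument on all independent sets, as the paper does.

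Second, the co-degree bookkeeping is off. You correctly observe that any two vertices of $\HH$ lie in at most one edge; this means $d^{(2)}(v)=1$ (not $\le N$), and likewise $d^{(3)}(v)=1$. Plugging into the definition gives $\delta_2=1/(N\tau)$, $\delta_3=1/(N\tau^2)$, hence $\delta(\HH,\tau)=4/(N\tau)+2/(N\tau^2)$, not $O(\tau)+O(1/(N\tau^2))$. The paper takes $\tau=\Theta(N^{-1/2})$, which makes $\delta(\HH,\tau)$ a small constant and yields $\log|\C|=O(N^{1/2}\log N)=o(N)$. Your alternative of a sufficiently small constant $\tau$ would also work (then $\delta\to 0$ as $N\to\infty$ and $\log|\C|\le c\,\tau\log(1/\varepsilon')\log(1/\tau)\cdot N$ can be made $\le\varepsilon N$), but the expression $O_\varepsilon(N\tau\log^2 N)$ you wrote is not what Theorem~\ref{thm:container}(iii) gives: with $\tau,\varepsilon'$ constant the logarithmic factors are constants, not $\log N$.
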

	
	\begin{proof}
		Fix $\varepsilon \in (0,1)$. Using Proposition \ref{prop:ssat2}, we obtain a $\sigma>0$. We choose $\varepsilon_0>0$ sufficiently small, in particular with $\varepsilon_0 < \sigma$. 
		We apply Theorem \ref{thm:container} to the 3-uniform hypergraph $\HH$. First, realize that every vertex in $\HH$ has degree $N$, hence the average degree of $\HH$ is $N$. For any $v\in V(\HH)$, we have $d^{(2)}(v)=d^{(3)}(v)=1$. Therefore, 
		$$ \delta_2 \tau^1 \cdot 3 N^2 = \sum_{v}d^{(2)}(v) = 3N \quad \Rightarrow \quad \delta_2=1/N\tau, $$ 
		$$ \delta_3 \tau^2 \cdot 3N^2=\sum_{v}d^{(3)}(v)=3N \quad \Rightarrow \quad \delta_3=1/N\tau^2. $$
		
		\noindent We obtain that
		
		\[
		\delta(\HH,\tau) = \frac{4}{N\tau}+ \frac{2}{N\tau^2}.
		\]
		
		Take $\tau = 400 \varepsilon_0^{-1} N^{-1/2}$, so that $\delta(\HH,\tau) = \frac{4}{N\tau}+ \frac{2}{N\tau^2} \le \frac{\varepsilon_0}{72}$.
		We can use Theorem \ref{thm:container} with the given $\tau$ and $\varepsilon_0$ to get a collection of containers $\C$ with size 
		\[
		\log |\C|\leq c \cdot \varepsilon_0^{-1} N^{1/2}\cdot \log(1/\varepsilon_0)\cdot \log(1/\tau) \le C_{\varepsilon_0} N^{1/2}\log(N) ,
		\]
		with $C_{\varepsilon_0}$ some positive constant depending on $\varepsilon_0$.
		
		Now, suppose we fix a container $D\in\C$ with $D\cap X=A^*$, $D\cap Y=B^*$ and $D\cap Z=C^*$. 
		Note that the independent sets contained in containers with $\min\{|A^*|,|B^*|,|C^*|\} < \varepsilon |G|$ are not included in the count for this claim, we already counted them in the previous sections.
		Assume, then, that $|A^*|,|B^*|,|C^*|\geq \varepsilon N$.
		
		If $|A^*|+|B^*|+|C^*|\geq (3/2+\varepsilon)N$, then 
		by Proposition \ref{prop:ssat2}, we get a contradiction with $e(\HH[D]) \geq \sigma N^2$ and $e(\HH[D])\leq \varepsilon_0 e(\HH)<\sigma N^2$. Then $|A^*|+|B^*|+|C^*|\leq \left( \frac{3}{2}+\varepsilon \right)N$, and the number of independent sets contained in $D$ is at most the number of subsets in it, $2^{(3/2+\varepsilon)N}$. Therefore, for sufficiently small $\varepsilon_0$, we have at most
		\[
		2^{(3/2+\varepsilon)N+C_{\varepsilon_0} N^{1/2}\log(N)}<2^{(3/2+2\varepsilon)N}
		\]
		independent sets.
	\end{proof}
	
	\begin{remark}\label{rem:index2} \normalfont
		As mentioned in Remark 22 of \cite{Semchankau}, the exponent 3/2 in Claim \ref{claim:generalbigcase} is best possible. If $H$ is a subgroup of index 2, $A,B \subset H$ and $C \subset G \backslash H$, then $(A,B,C)$ is a sum-free triple. Therefore, in the case $G$ has a subgroup of index 2, there are at least $2^{3N/2}$ triplets. 
	\end{remark}
	
	\section{Proof of Theorem \ref{thm:main}} \label{sec:proof}
	
	When $G=\Zp$, there is only one proper subgroup $H$ for which $|H|=1$. Hence, adapting the proof of Proposition~\ref{prop:ssat2} we obtain the following. 
	
	\begin{proposition}\label{prop:ssat1}
		Fix $\varepsilon \in (0,1)$. Then for sufficiently large $p$ the following holds. Let $A, B, C \subset \Zp$ be sets. There exists $\sigma >0$ such that if $ \varepsilon p < |A|, |B|, |C|$ and $|A|+|B|+|C| \ge \left( 1 + \varepsilon \right) p$ then there are at least $\sigma p^2$ sums $x+y=z$ with $x\in A$, $y\in B$, and $z \in C$.
	\end{proposition}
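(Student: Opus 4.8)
The plan is to mimic the proof of Proposition~\ref{prop:ssat2} almost verbatim, but exploit the fact that in $\Zp$ the only proper subgroup is the trivial one $H=\{0\}$, so $|H|=1$ rather than $|H|\le N/2$. First I would set $N=p$ and observe that the hypotheses $\varepsilon p < |A|,|B|,|C|$ give us the condition needed to apply Lemma~\ref{lem:ssat2}; concretely, pick an auxiliary parameter $\varepsilon' = \varepsilon'(\varepsilon)$ (something like $\varepsilon' = (\varepsilon/c)^2$ for a suitable constant $c$) so that $\sqrt{\varepsilon'}\,p < \min\{|A|,|B|\}$ holds, and so that the error term $3\sqrt{\varepsilon'}\,p$ is a small fraction of $\varepsilon p$, say at most $(\varepsilon/4)p$.

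Next I would apply Lemma~\ref{lem:ssat2} with this $\varepsilon'$ and with $H=\{0\}$ the (unique) maximal proper subgroup, getting
\[
|A+_{\varepsilon'}B| \ge \min\{p,\ |A|+|B|-1\} - 3\sqrt{\varepsilon'}\,p \ge |A|+|B| - 1 - (\varepsilon/4)p,
\]
where I may assume $|A|+|B|-1 \le p$ (otherwise the bound is even stronger, or one truncates). Then, exactly as in the proof of Proposition~\ref{prop:ssat2}, I intersect with $C$:
\[
|C \cap (A+_{\varepsilon'}B)| \ge |C| + |A+_{\varepsilon'}B| - p \ge |C| + |A| + |B| - p - 1 - (\varepsilon/4)p \ge \varepsilon p - 1 - (\varepsilon/4)p,
\]
using the hypothesis $|A|+|B|+|C| \ge (1+\varepsilon)p$. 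For $p$ large this is at least, say, $(\varepsilon/2)p$. Each element of this intersection lies in $C$ and has at least $\varepsilon' p$ representations as $a+b$ with $a\in A$, $b\in B$, so the number of sums $x+y=z$ with $x\in A$, $y\in B$, $z\in C$ is at least $(\varepsilon/2)p \cdot \varepsilon' p = (\varepsilon \varepsilon'/2)\,p^2$, and I set $\sigma = \varepsilon\varepsilon'/2$ (adjusting constants to absorb the additive $-1$ and to handle small $p$ by choosing $\sigma$ small enough, or noting the statement is for $p$ large).

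The only genuinely delicate point — and it is minor — is bookkeeping the relation between $\varepsilon$ and the auxiliary $\varepsilon'$ so that simultaneously (a) $\sqrt{\varepsilon'}\,p$ beats $\min\{|A|,|B|\} \ge \varepsilon p$ as required by Lemma~\ref{lem:ssat2}, and (b) the accumulated error $3\sqrt{\varepsilon'}\,p$ plus the additive constant $1$ does not swallow the gain $\varepsilon p$ coming from the density hypothesis. Choosing $\varepsilon'$ a sufficiently small power of $\varepsilon$ (e.g. $\varepsilon' = \varepsilon^2/100$) handles both at once. Everything else is a line-by-line repetition of the $G$ general case with the substitution $|H|=1$ in place of $|H|\le N/2$, which is precisely why the density threshold improves from $3/2+\varepsilon$ to $1+\varepsilon$.
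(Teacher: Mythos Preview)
Your proposal is correct and matches the paper's approach essentially line for line: the paper does not even spell out a separate proof, but simply says to adapt the proof of Proposition~\ref{prop:ssat2} using that the unique proper subgroup of $\Zp$ is trivial ($|H|=1$), which is exactly what you do. The paper additionally remarks that one could instead invoke the $\Zp$-specific Lemma (Theorem~8 in \cite{Semchankau}) giving $|A+_\varepsilon B|\ge \min\{p,|A|+|B|\}-2p\sqrt{\varepsilon}$, but this alternative leads to the same computation with marginally different constants.
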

	
	An alternative way to prove Proposition \ref{prop:ssat1} directly is by using the following Lemma.
	
	\begin{lemma}[Theorem 8 in \cite{Semchankau}]
		Let $A, B \subset \Zp$ be sets and $\varepsilon \in (0,1)$ be a real number, such that $\sqrt{\varepsilon} p < |A|, |B|$. Then
		$$ |A+_\varepsilon B | \ge \min\{ p, |A|+|B| \} - 2p \sqrt{\varepsilon}.$$
	\end{lemma}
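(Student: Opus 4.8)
The plan is to deduce this from Pollard's theorem, the quantitative refinement of Cauchy--Davenport; this is also what makes the statement specific to $\Zp$ with $p$ prime, since over a general abelian group one loses the order of a maximal proper subgroup, exactly as in Lemma~\ref{lem:ssat2}. For $A,B\subseteq\Zp$ write $r(x)=|\{(a,b)\in A\times B: a+b=x\}|$ for the representation function, so $\sum_{x\in\Zp}r(x)=|A||B|$ and, by definition, $A+_\varepsilon B=\{x\in\Zp: r(x)\ge\varepsilon p\}$. Pollard's theorem asserts that for every integer $t$ with $1\le t\le\min\{|A|,|B|\}$,
$$\sum_{x\in\Zp}\min\{r(x),t\}\ \ge\ t\cdot\min\{p,\ |A|+|B|-t\}.$$
First I would fix an integer $t$ close to $\sqrt\varepsilon\,p$; the hypothesis $\sqrt\varepsilon\,p<|A|,|B|$ makes such a $t$ admissible for Pollard's theorem, and $\varepsilon<1$ gives $t>\varepsilon p$, which is used below.

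Next I would bound the left-hand side from above. Set $S=A+_\varepsilon B$ and split the sum according to whether $x\in S$. For $x\in S$ one has $\min\{r(x),t\}\le t$, contributing at most $|S|\,t$ in total; for $x\notin S$ one has $r(x)<\varepsilon p<t$, so $\min\{r(x),t\}=r(x)<\varepsilon p$, contributing at most $(p-|S|)\varepsilon p$. Combining with Pollard's inequality,
$$t\cdot\min\{p,\ |A|+|B|-t\}\ \le\ |S|\,t+(p-|S|)\varepsilon p .$$
If $|A|+|B|-t\ge p$, this already forces $|S|=p$ (using $t>\varepsilon p$), and we are done. Otherwise, elementary algebra rearranges the inequality into $|S|\ \ge\ |A|+|B|-t-\tfrac{\varepsilon p^2}{t}+(\text{a nonnegative term})$.

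Finally, plugging in $t\approx\sqrt\varepsilon\,p$ makes both error contributions $t$ and $\varepsilon p^2/t$ roughly $\sqrt\varepsilon\,p$, so their sum is about $2\sqrt\varepsilon\,p$; since $|A|+|B|\ge\min\{p,|A|+|B|\}$ in the range that matters, this yields $|A+_\varepsilon B|\ge\min\{p,|A|+|B|\}-2p\sqrt\varepsilon$. The hard part is only the bookkeeping: one cannot take $t=\sqrt\varepsilon\,p$ exactly, so I would need to check that rounding $t$ to a nearby integer (and, if necessary, peeling off the easy regimes $|A|+|B|-t\ge p$ and $|A|+|B|\ge p$ first) perturbs the two error terms by an amount still absorbed into $2p\sqrt\varepsilon$. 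All the substance is in Pollard's theorem plus the pigeonhole split above; nothing else is needed.
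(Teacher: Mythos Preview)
The paper does not give its own proof of this lemma: it is quoted verbatim as Theorem~8 of \cite{Semchankau} and used as a black box, so there is nothing to compare against in the present paper. That said, your approach via Pollard's inequality is exactly the standard (and, as far as I know, the only natural) way to prove this statement over $\Zp$, and it is almost certainly how the cited reference proves it as well. The core of your argument is sound: Pollard gives $\sum_x\min\{r(x),t\}\ge t\min\{p,|A|+|B|-t\}$; splitting the left side over $S=A+_\varepsilon B$ and its complement gives $t\min\{p,|A|+|B|-t\}\le |S|t+(p-|S|)\varepsilon p\le |S|t+\varepsilon p^2$, hence $|S|\ge \min\{p,|A|+|B|-t\}-\varepsilon p^2/t$, and optimising at $t\approx\sqrt{\varepsilon}\,p$ yields the claimed loss of $2\sqrt{\varepsilon}\,p$.

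One honest caveat you already flag: with $t$ forced to be an integer, the choice $t=\lceil\sqrt{\varepsilon}\,p\rceil$ gives $t+\varepsilon p^2/t\le 2\sqrt{\varepsilon}\,p+1$, so a naive rounding leaves an additive $+1$ that is not obviously absorbed into $2p\sqrt{\varepsilon}$ for every $\varepsilon\in(0,1)$. This is pure bookkeeping (one can, for instance, exploit that $r(x)$ is integer-valued so that $A+_\varepsilon B=A+_{\varepsilon'}B$ for any $\varepsilon'$ with $\lceil \varepsilon p\rceil=\lceil \varepsilon' p\rceil$, and adjust $\varepsilon$ slightly), but if you intend the statement with the exact constant~$2$ you should spell out which rounding and which easy regimes make the inequality go through cleanly.
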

	
	\noindent Similarly to the proof of Claim \ref{claim:generalbigcase}, but now using Proposition \ref{prop:ssat1}, we obtain the following.
	
	\begin{claim}
		Fix $\varepsilon \in (0,1)$. Then for sufficiently large $p$, the number of sum-free triplets $A,B,C \subset \Zp$ with $\min \{ |A|, |B|, |C| \} \ge \varepsilon p$ is at most 
		\begin{equation}\label{eq:caseZp,kbig}
			2^{(1+2\varepsilon) p}.
		\end{equation}
	\end{claim}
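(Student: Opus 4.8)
The plan is to mimic the proof of Claim \ref{claim:generalbigcase} almost verbatim, replacing the supersaturation input (Proposition \ref{prop:ssat2}) with Proposition \ref{prop:ssat1}, which is the version tailored to $\Zp$ where the only proper subgroup is trivial. Concretely, fix $\varepsilon \in (0,1)$ and feed it into Proposition \ref{prop:ssat1} to obtain $\sigma > 0$; then pick an auxiliary parameter $\varepsilon_0 > 0$ small, in particular $\varepsilon_0 < \sigma$, to be used as the ``$\varepsilon$'' in the Hypergraph Container Lemma (Theorem \ref{thm:container}).

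First I would set up the container application exactly as before. The auxiliary hypergraph $\HH$ on vertex set $X \cup Y \cup Z$ (three copies of $\Zp$) is $3$-uniform, every vertex has degree $p$, and for every vertex $v$ we have $d^{(2)}(v) = d^{(3)}(v) = 1$, so the same computation gives $\delta_2 = 1/(p\tau)$, $\delta_3 = 1/(p\tau^2)$, and $\delta(\HH,\tau) = \tfrac{4}{p\tau} + \tfrac{2}{p\tau^2}$. Choosing $\tau = 400\varepsilon_0^{-1} p^{-1/2}$ makes $\delta(\HH,\tau) \le \varepsilon_0/72$ and also ensures $\tau < 1/(200 \cdot 3 \cdot 3!^2)$ for $p$ large, so Theorem \ref{thm:container} applies and yields a container family $\C$ with $\log|\C| \le c\,\varepsilon_0^{-1} p^{1/2} \log(1/\varepsilon_0)\log(1/\tau) \le C_{\varepsilon_0}\, p^{1/2}\log p$.

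Next I would bound the number of independent sets inside each container $D \in \C$, writing $A^* = D \cap X$, $B^* = D \cap Y$, $C^* = D \cap Z$. Containers with $\min\{|A^*|,|B^*|,|C^*|\} < \varepsilon p$ contribute only triplets already accounted for in the earlier cases, so we may assume $|A^*|,|B^*|,|C^*| \ge \varepsilon p$. If additionally $|A^*| + |B^*| + |C^*| \ge (1+\varepsilon)p$, then Proposition \ref{prop:ssat1} forces $e(\HH[D]) \ge \sigma p^2$, contradicting property (ii) of the container lemma, which gives $e(\HH[D]) \le \varepsilon_0\, e(\HH) < \sigma p^2$ by the choice $\varepsilon_0 < \sigma$ (note $e(\HH) = p^2$). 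Hence $|A^*| + |B^*| + |C^*| \le (1+\varepsilon)p$, and the number of subsets of such a container — in particular the number of independent sets it contains — is at most $2^{(1+\varepsilon)p}$. Summing over $\C$ gives at most $2^{(1+\varepsilon)p + C_{\varepsilon_0} p^{1/2}\log p} < 2^{(1+2\varepsilon)p}$ for $p$ sufficiently large, which is the claimed bound \eqref{eq:caseZp,kbig}.

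The only genuine point of care — and the main obstacle — is verifying that the threshold $(1+\varepsilon)p$ coming from Proposition \ref{prop:ssat1} is exactly the one that makes the sum-free count collapse to $2^{(1+\varepsilon)p}$: this is why the statement needs the hypothesis $|A|+|B|+|C| \ge (1+\varepsilon)p$ in the proposition rather than $(3/2+\varepsilon)N$ as in the general case, and it reflects that $\Zp$ has no index-$2$ subgroup, so the extremal examples of Remark \ref{rem:index2} disappear. Everything else is a routine transcription of the proof of Claim \ref{claim:generalbigcase}.
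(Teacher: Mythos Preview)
Your proposal is correct and is exactly the approach the paper takes: the paper's proof of this claim consists of the single sentence ``Similarly to the proof of Claim~\ref{claim:generalbigcase}, but now using Proposition~\ref{prop:ssat1}, we obtain the following,'' and you have spelled out precisely that substitution, with the threshold $(3/2+\varepsilon)N$ replaced by $(1+\varepsilon)p$ because the only proper subgroup of $\Zp$ is trivial.
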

	
	Now that we covered all the possible cases for $k$, we can proceed to prove Theorem \ref{thm:main}.
	
	\begin{proof}[Proof of Theorem \ref{thm:main}] 
		Using the same setting described in the introduction, we count the ordered sum-free triplets $(A,B,C)$ such that $\min\{|A|,|B|,|C|\}=k$. We have an upper bound for the number of triplets in  the cases $k=0,1,2$ on \eqref{eq:caseZp,k=0,1,2}; for $k=3$ on \eqref{eq:caseZp,k=3,upper}; for $4\leq k\leq \varepsilon N$ on $\eqref{eq:caseZp,ksmall}$ and finally for $k\ge \varepsilon N$ on \eqref{eq:caseZp,kbig}. Summing up the upper bounds, for $\varepsilon< 1/10^3$, we obtain that there are at most 
		$$ 3\cdot 4^p + 3p \cdot 3^p + 3\binom{p}{2} \left( \frac{1+\sqrt{5}}{2} \right)^{2p} + Q_{3,1}(p) \cdot (1+\sqrt{2})^p + Q_{3,2}(p) \cdot 458^{p/7} + 3\binom{p}{4} (1+o(1)) \cdot 31^{p/4} $$
		many triplets, where we add the missing terms $2^{(1+2\varepsilon)p}$ and $O(p^62^p)$ to the $o(31^{p/4})$ term.
		
		Now, for the lower bound, the triplets for the cases $k=0,1,2$ are counted on \eqref{eq:caseZp,k=0,1,2}. For the triplets in the case $k=3$, we obtained a lower bound on \eqref{eq:caseZp,k=3,lower}. The amount of triplets that are overcounted in both calculations is upper bounded by $9\binom{p}{3}^2 2^{p}$. This error term is omitted in the final result, since we truncate the value of $(1-\alpha)^2/\alpha$ and write $2.38898$ instead. The sum of the two lower bounds shows there are at least 
		$$ 3\cdot 4^p + 3p \cdot 3^p + 3\binom{p}{2} \left( \frac{1+\sqrt{5}}{2} \right)^{2p} + Q_{3,1}(p) \cdot (1+\sqrt{2})^p + Q_{3,2}(p) \cdot 2.38898^{p}$$
		many triplets.
	\end{proof}

	\section{General abelian groups} \label{sec:generalgroup}
	
	Let $(G,+)$ be an additive abelian group of order $N$. Using the same methods as in the case $G=\Zp$, we now prove the following similar result.
	
	\textbf{Theorem \ref{thm:generalcase}.}
	\textit{
		Fix $\varepsilon \in (0,1)$. Then for sufficiently large $N$ the following holds. Let $(G,+)$ be an additive abelian group of order $N$. The number of ordered triplets $A,B,C\subseteq G$ such that there is no triplet $a\in A$, $b\in B$ and $c\in C$ with $a+b=c$ is at most}
	$$ 3\cdot 4^N + 3N \cdot 3^N + 2^{(3/2+3\varepsilon)N} . $$
	\begin{proof}
		Assume that for a fixed integer $k$ we have $\min\{|A|,|B|,|C|\}=k$ and $|A|=k$. When $k=0$ and $k=1$ we obtain the same number of triplets $A,B,C$ as in $G=\Zp$. We get exactly 
		
		\begin{equation}\label{eq:generalcasek=0}
			3 \cdot 4^N - 3 \cdot 2^N + 1 
		\end{equation}
		triplets for $k=0$ and
		\begin{equation}\label{eq:generalcasek=1}
			3N\cdot (3^N-1) - 3N^2 \cdot (2^{N-1}-1) + N^2(N-1) 
		\end{equation}
		triplets for $k=1$.
		
		As in Section \ref{sec:smallcase}, we use Theorem \ref{thm:kahn} to upper bound the triplets with $2 \le k \le \varepsilon N$. The number of independent sets in a 2-regular bipartite graph on $2N$ vertices is at most $7^{N/2}$, and in a $k$-regular bipartite graph on $2N$ vertices for $k\ge 3$ is at most $15^{N/3}$.
		Therefore, for sufficiently small $\varepsilon$, the number of triplets with $2 \le \min \{ |A|, |B|, |C| \} \le \varepsilon N$ is at most  
		\begin{equation}\label{eq:generalcasesmall}
			\left( 3\binom{N}{2} + o(1) \right) 7^{N/2} .
		\end{equation} 
		
		\noindent For $k \ge \varepsilon N$, by Claim \ref{claim:generalbigcase}, we obtain that the number of triplets is at most 
		\begin{equation}\label{eq:generalcasebig}
			2^{(3/2+2 \varepsilon )N}.
		\end{equation}
		The upper bounds in \eqref{eq:generalcasek=0}, \eqref{eq:generalcasek=1}, \eqref{eq:generalcasesmall} and \eqref{eq:generalcasebig} give together the desired upper bound for the total number of triplets. Note that even that we obtained the bound \eqref{eq:generalcasesmall} only for small $\varepsilon$, the final result for all $\varepsilon \in (0,1)$ follows from it.
	\end{proof}

	\subsection{More precise computations in the general case}
	
	Write $N=p_1^{\alpha_1} \cdot \ldots \cdot p_t^{\alpha_t}$ where $p_1 < \ldots < p_t$ are prime numbers and $\alpha_i \ge 1$. Then, we can write $G = G_1 \times \cdots \times G_t$, where $|G_i|=p_i^{\alpha_i}$. Given $p_1$, we can get better estimates for the upper and lower bounds in the number of ordered triplets $(A,B,C)$. We highlight that the following Theorem implies Theorem \ref{thm:generalcase}. 
	
	\begin{theorem} \label{thm:p1}
		Fix $\varepsilon \in (0,1)$. Then for sufficiently large $N$ the following holds. Let $(G,+)$ be an additive abelian group of order $N=p_1^{\alpha_1} \cdot \ldots \cdot p_t^{\alpha_t}$. Let $\varepsilon \in (0,1)$. The number of ordered triplets $A,B,C\subseteq G$ such that there is no triplet $a\in A$, $b\in B$ and $c\in C$ with $a+b=c$ is at most
		$$ 3\cdot 4^N + 3N \cdot 3^N + 2^{(1+1/p_1+2 \varepsilon )N} + 3\binom{N}{2} (\phi^{2p_1}+\phi^{-2p_1})^{N/p_1} + 3\binom{N}{3}(1+o(1)) 15^{N/3} , $$
		and at least 
		$$ 3\cdot 4^N + 3N \cdot 3^N + 2^{(1+1/p_1)N} + \frac{3N}{2} (\phi^{2p_1}+\phi^{-2p_1})^{N/p_1} . $$
	\end{theorem}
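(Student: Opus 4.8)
The plan is to mimic the case-analysis of Theorem \ref{thm:generalcase}, but to track the smallest prime factor $p_1$ more carefully in the two places where it matters: the case $k=2$ and the "big" case $k\ge\varepsilon N$. Write $G = G_1\times\cdots\times G_t$ with $|G_i|=p_i^{\alpha_i}$, and recall that $G$ has a subgroup $H_1$ of index $p_1$ (take any index-$p_1$ subgroup of the $p_1$-part $G_1$ times all of $G_2\times\cdots\times G_t$), and in fact a chain of subgroups of every index dividing $|G|$; the key point is that the \emph{minimal} index of a proper subgroup is $p_1$, so a maximal proper subgroup has order $N/p_1$. First I would redo the cases $k=0,1$ verbatim as in Theorem \ref{thm:generalcase}, giving the terms $3\cdot 4^N + 3N\cdot 3^N$ up to lower-order corrections.

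For the upper bound, the case $k=2$ is where $p_1$ enters: when $|A|=2$, say $A=\{g,g+h\}$ with $h$ of order $m$, the link graph $\HH_A$ is a disjoint union of $N/m$ cycles each of length $2m$ (the orbits of the translation-by-$h$ action interleaved between $Y$ and $Z$), so by Claim \ref{claim:indset_cycle} it has $(\phi^{2m}+\phi^{-2m})^{N/m}$ independent sets. Since $x\mapsto (\phi^{2x}+\phi^{-2x})^{1/x}$ is decreasing, the maximum over nontrivial $h$ is attained at the smallest possible order, $m=p_1$, giving at most $(\phi^{2p_1}+\phi^{-2p_1})^{N/p_1}$ independent sets, and multiplying by the polynomial count $3\binom{N}{2}$ of such pairs (with the usual $(1+o(1))$ absorbing overcounting and the $O(N^4 2^N)$ error) yields the $k=2$ contribution. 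For $3\le k\le\varepsilon N$ I would use Theorem \ref{thm:kahn} exactly as in Section~\ref{sec:smallcase}: a $3$-regular bipartite graph on $2N$ vertices has at most $15^{N/3}$ independent sets and higher-degree graphs even fewer, so after summing binomial coefficients (bounded as in Section~\ref{sec:smallcase}) this case contributes $3\binom{N}{3}(1+o(1))15^{N/3}$. Finally, for $k\ge\varepsilon N$ I would rerun the container argument of Claim \ref{claim:generalbigcase}, but now using that a maximal proper subgroup $H$ satisfies $|H|\le N/p_1$ rather than merely $|H|\le N/2$; feeding this into Lemma \ref{lem:ssat2} and the supersaturation computation of Proposition \ref{prop:ssat2} shows that a container $D$ with all three parts of size $\ge\varepsilon N$ and no $\sigma N^2$ edges must have $|A^*|+|B^*|+|C^*|\le (1+1/p_1+\varepsilon)N$, so it contributes at most $2^{(1+1/p_1+\varepsilon)N}$ independent sets, and the $2^{C_{\varepsilon_0}N^{1/2}\log N}$ container overhead is absorbed into the extra $\varepsilon N$ in the exponent. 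Adding the four ranges of $k$ gives the claimed upper bound.

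For the lower bound I would exhibit explicit families. Let $H_1$ be a subgroup of index $p_1$. Taking $A,B\subseteq H_1$ arbitrary and $C\subseteq G\setminus H_1$ arbitrary gives a sum-free triplet (sums of elements of $H_1$ stay in $H_1$), and since $|H_1|=N/p_1$ and $|G\setminus H_1| = (1-1/p_1)N$ this produces $2^{|H_1|}\cdot 2^{|H_1|}\cdot 2^{|G\setminus H_1|} = 2^{(1+1/p_1)N}$ triplets, giving the $2^{(1+1/p_1)N}$ term. For the $k=2$ term, fix an element $h$ of order exactly $p_1$ (exists since $p_1\mid |G|$) and consider $A=\{g,g+h\}$ as $g$ ranges over a transversal — more simply, observe that among the $3\binom{N}{2}$ choices of a $2$-element set, those whose difference is a multiple-free... rather, those $2$-sets $\{a,b\}$ with $b-a$ in the cyclic subgroup $\langle h\rangle$ of order $p_1$ give link graphs with $(\phi^{2p_1}+\phi^{-2p_1})^{N/p_1}$ independent sets; there are $\frac{N(p_1-1)}{2}\ge \frac{N}{2}$ such sets in each of the three coordinates, and after subtracting the $O(N^4 2^N)$ overcount for triplets with two small parts we retain at least $\frac{3N}{2}(\phi^{2p_1}+\phi^{-2p_1})^{N/p_1}$ triplets with $\min\{|A|,|B|,|C|\}=2$. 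Since the $k=2$ family and the index-$p_1$ family overlap in at most $O(N^2)\cdot 2^{(1+1/p_1)N/?}$... in fact in a negligible set, summing the two lower-order-disjoint contributions gives the stated lower bound; I would note that $(\phi^{2p_1}+\phi^{-2p_1})^{1/p_1}<\phi^2<4$ and $2^{1+1/p_1}\le 2^{3/2}<4$, so these are genuinely smaller than the $k=0,1$ terms and there is no cancellation.

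The main obstacle I expect is the supersaturation bookkeeping in the big case: one must check that Lemma \ref{lem:ssat2} with the improved bound $|H|\le N/p_1$ really does force $|A^*|+|B^*|+|C^*|\le(1+1/p_1+\varepsilon)N$ uniformly, and that the $\varepsilon$-losses in $+_\varepsilon$ and the $C_{\varepsilon_0}N^{1/2}\log N$ container overhead can both be hidden inside the single extra $\varepsilon N$ in the exponent — this requires choosing $\varepsilon_0$ small relative to $\varepsilon$ and $\sigma$ exactly as in Claim \ref{claim:generalbigcase}, but is otherwise routine. A secondary subtlety is verifying that $x\mapsto(\phi^{2x}+\phi^{-2x})^{1/x}$ is decreasing on $x\ge 1$ (so that $p_1$, the smallest attainable element order, indeed maximizes the $k=2$ count); this is a short calculus check since $\log(\phi^{2x}+\phi^{-2x})/x \to 2\log\phi$ from above as $x\to\infty$.
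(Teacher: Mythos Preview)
Your proposal is correct and follows essentially the same approach as the paper: split by $k=\min\{|A|,|B|,|C|\}$, handle $k=0,1$ as before, for $k=2$ identify $\HH_A$ as a union of $2m$-cycles with $m$ the order of the difference and use monotonicity of $(\phi^{2m}+\phi^{-2m})^{N/m}$ to reduce to $m=p_1$, use Kahn for $3\le k\le\varepsilon N$, rerun the container argument with $|H|\le N/p_1$ in Lemma~\ref{lem:ssat2} for the big case, and for the lower bound exhibit the index-$p_1$ subgroup construction together with at least one element of order $p_1$. The paper phrases the monotonicity via $(1+\phi^{-4t})^{N/t}$ rather than $(\phi^{2t}+\phi^{-2t})^{1/t}$, and is equally informal about the overcounting between the $k=2$ and subgroup families in the lower bound, so your level of detail matches.
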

	
	\begin{proof}
		We proceed as in the proof of Theorem \ref{thm:generalcase} and obtain the same bounds when $k =0,1$.
		When $k=2$, we write $A=\{x,y\}$. In this case, we have that $\HH_A$ is the disjoint union of cycles of length $2 \cdot o( x-y )$, where $o(x-y)$ denotes the order of the element $x-y$ in $G$. We count the independent sets in $\HH_A$ using Claim \ref{claim:indset_cycle}.  
		
		We conclude that there are 
		\begin{equation} \label{eq:p1k=2}
			\sum_{t=2}^{N} \frac{a_t}{N-1} \cdot 3 \binom{N}{2} (\phi^{2t}+\phi^{-2t})^{N/t} = \frac{3N}{2} \phi^{2N} \sum_{t=2}^{N} a_t \cdot (1+ \phi^{-4t})^{N/t}
		\end{equation}
		triplets with $\min \{ |A|, |B|, |C| \} = 2$, where $a_t$ is the number of elements of order $t$ in $G$. 
		As $(1+\phi^{-4x})^{N/x}$ is a decreasing function in $x$ and $a_t=0$ for $1<t<p_1$,
		the maximum value of the sum in \eqref{eq:p1k=2} is obtained when $a_{p_1}=N-1$. Thus, for $k=2$, the number of triplets is at most  
		\begin{equation} \label{eq:p1_up_k=2}
			3\binom{N}{2} (\phi^{2p_1}+\phi^{-2p_1})^{N/p_1} .
		\end{equation}
		Moreover, as $p_1$ divides $|G|$, there is at least one element of order $p_1$ in $G$. Hence, we have at least  
		\begin{equation} \label{eq:p1_lw_k=2}
			\frac{3N}{2} (\phi^{2p_1}+\phi^{-2p_1})^{N/p_1}
		\end{equation}
		many triplets.
		
		When $3\le k \le \varepsilon N$ and $\varepsilon$ is small, we follow the same method as in Section \ref{sec:smallcase}, which gives that there are at most
		\begin{equation} \label{eq:p1_k=3}
			\left( 3\binom{N}{3} +o(1) \right) 15^{N/3}    
		\end{equation}
		many triplets.
		
		For the case $k \ge \varepsilon N$, we note that the maximum order of a proper subgroup is $N / p_1$. Similarly to the proof of Claim \ref{claim:generalbigcase}, by Lemma \ref{lem:ssat2} with $H$ of order $|H|=N/p_1$, we obtain that there are at most 
		\begin{equation} \label{eq:p1_up_bigcase}
			2^{(1+1/p_1+2\varepsilon)N}
		\end{equation}
		many triplets.
		
		For a lower bound, notice that if $H$ is a subgroup of index $p_1$, $A,B \subset H$ and $C \subset G \backslash H$, then $(A,B,C)$ is a sum-free triple. Therefore there are at least 
		\begin{equation} \label{eq:p1_lw_bigcase}
			2^{(1+1/p_1)N}
		\end{equation}
		many triplets. 
		
		Similarly to the proof of Theorem \ref{thm:generalcase}, we conclude by adding the bounds for all possible $k$. From \eqref{eq:generalcasek=0}, \eqref{eq:generalcasek=1}, \eqref{eq:p1_up_k=2}, \eqref{eq:p1_k=3} and \eqref{eq:p1_up_bigcase}, we obtain that for small enough $\varepsilon$ there are at most 
		$$ 3\cdot 4^N + 3N \cdot 3^N + 2^{(1+1/p_1+2 \varepsilon )N} + 3\binom{N}{2} (\phi^{2p_1}+\phi^{-2p_1})^{N/p_1} + 3\binom{N}{3}(1+o(1)) 15^{N/3} $$
		triplets, and from \eqref{eq:generalcasek=0}, \eqref{eq:generalcasek=1}, \eqref{eq:p1_lw_k=2} and \eqref{eq:p1_lw_bigcase}, we obtain that there are at least 
		$$ 3\cdot 4^N + 3N \cdot 3^N + 2^{(1+1/p_1)N} + \frac{3N}{2} (\phi^{2p_1}+\phi^{-2p_1})^{N/p_1} $$ 
		many triplets. Finally, note that the upper bound for all $\varepsilon \in (0,1)$ follows from the upper bound for small $\varepsilon$.
	\end{proof}

	At last, we discuss the estimates we obtain from Theorem \ref{thm:p1} for small values of $p_1$.
	For $p_1=2$, we have more triplets coming from the $k\ge \varepsilon N$ case, since 
	$$2^{1+1/2} = 8^{1/2} > 7^{1/2} = (\phi^{4}+\phi^{-4})^{1/2} \approx 2.64575 > 15^{1/3} \approx 2.466.$$
	
	\noindent For $p_1 = 3$, we have more triplets coming from $k=2$ case, then $k\ge \varepsilon N$, then $k=3$, since
	$$(\phi^{6}+\phi^{-6})^{1/3} = 18^{1/3} > 16^{1/3} = 2^{1+1/3} > 15^{1/3}.$$
	
	\noindent For $p_1 = 5$, we have more triplets coming from $k=2$ case, then $k=3$, then $k\ge \varepsilon N$, since
	$$(\phi^{10}+\phi^{-10})^{1/5} = 123^{1/5} \approx 2.618 > 
	15^{1/3} \approx 2.466 > 
	2^{1+1/5} \approx 2.297 .$$
	
	\section*{Acknowledgements}
	
	We thank the anonymous referee for their useful comments that improved the presentation of this manuscript.
	
	\bibliographystyle{amsplain} 
	\bibliography{refs} 
	
	\appendix
	\section{Proof of Claim \ref{claim:indset_x=-1}}
	
	\textbf{Claim \ref{claim:indset_x=-1}.} 
	\textit{ 
		Let $A=\{-1,0,1\}$. For $p \ge 3$, the link graph $\HH_A$ has} 
	$$i(\HH_A) = (1+\sqrt{2})^p+(1-\sqrt{2})^p + 1.$$
	\begin{proof}
		We count the independent sets in $\HH_A$ recursively.
		First, assume $(Y,0)$ is part of the independent set. So $(Z,-1),(Z,0)$ and $(Z,1)$ are not in the independent set $I$. Let $G_p$ denote the graph obtained by removing $(Y,0)$ and $N((Y,0))$ from the original graph $\HH_A$. Let $a_p$ denote the number of independent sets on $G_p$.
		
		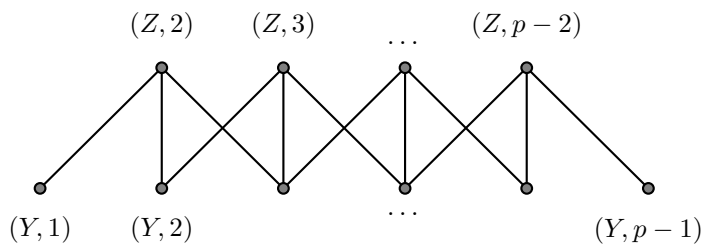
\begin{figure}[H]
			\centering
			\begin{tikzpicture}[thick,scale=0.8]
				\draw[color=white, use as bounding box] (0,-1) rectangle (10,3.3);
				\draw (8,2.7) node [fill=black!0,draw=black!0] {$(Z,p-2)$};
				\draw (10,-0.7) node [fill=black!0,draw=black!0] {$(Y,p-1)$};
				\foreach \x/\y in {0/1,1/2,2/3,3/4} 
				{\draw (2*\x,0) -- (2*\y,2);
					\draw (2*\y,2) -- (2*\x+2,0);
					\draw (2*\y,2) -- (2*\x+4,0);}
				\draw (0,0) node [label=below:{$(Y,1)$}] {};
				\draw (2,0) node [label=below:{$(Y,2)$}] {};
				\draw (4,0) node [] {};
				\draw (6,0) node [label=below:{$\dots$}] {};
				\draw (8,0) node [] {};
				\draw (10,0) node [] {};
				\draw (2,2) node [label=above:{$(Z,2)$}] {};
				\draw (4,2) node [label=above:{$(Z,3)$}] {};
				\draw (6,2) node [label=above:{$\dots$}] {};
				\draw (8,2) node [] {};
			\end{tikzpicture}
			\caption{The graph $G_p$.}
			\label{fig:Gp}
		\end{figure}
		
		From here we will calculate $a_p$ using recurrence. Assume $(Y,1)$ is in the independent set. The graph obtained by erasing $(Y,1)$ and $N((Y,1))$ from $G_p$ is isomorphic to $G_{p-1}$. If neither $(Y,1)$ and $(Z,2)$ are part of the independent set, erasing them we get again a graph isomorphic to $G_{p-1}$.
		
		The remaining case is when $(Z,2)$ belongs to the independent set. Let $H_p$ be the graph obtained after erasing $(Z,2)$ and its neighbors from $G_p$, and let $b_p$ be the number of independent sets in $H_p$. 
		
		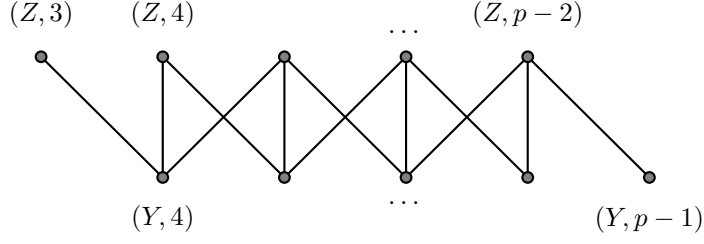
\begin{figure}[H]
			\centering
			\begin{tikzpicture}[thick,scale=0.8]
				\draw[color=white, use as bounding box] (0,-1) rectangle (10,3.3);
				\draw (8,2.7) node [fill=black!0,draw=black!0] {$(Z,p-2)$};
				\draw (10,-0.7) node [fill=black!0,draw=black!0] {$(Y,p-1)$};
				\foreach \x/\y in {1/2,2/3,3/4} 
				{\draw (2*\x,0) -- (2*\y,2);
					\draw (2*\y,2) -- (2*\x+2,0);
					\draw (2*\y,2) -- (2*\x+4,0);}
				\draw (4,0) -- (2,2);
				\draw (2,0) -- (2,2);
				\draw (2,0) -- (0,2);
				\draw (0,2) node [label=above:{$(Z,3)$}] {};
				\draw (2,0) node [label=below:{$(Y,4)$}] {};
				\draw (4,0) node [] {};
				\draw (6,0) node [label=below:{$\dots$}] {};
				\draw (8,0) node [] {};
				\draw (10,0) node [] {};
				\draw (2,2) node [label=above:{$(Z,4)$}] {};
				\draw (4,2) node [] {};
				\draw (6,2) node [label=above:{$\dots$}] {};
				\draw (8,2) node [] {};
			\end{tikzpicture}
			\caption{The graph $H_p$.}
			\label{fig:Hp}
		\end{figure}
		
		Then we have
		\[
		a_p=2a_{p-1}+b_p. 
		\]
		
		Now we proceed with the same recursive method for $b_p$. If $(Z,3)$ is part of the independent set, by erasing it and its neighbors we get a graph isomorphic to $H_{p-1}$. If neither $(Z,3)$ nor $(Y,4)$ belong to $I$, by erasing them we get a graph isomorphic to $H_{p-1}$. 
		
		Finally, if $(Y,4)$ belongs to the independent set we get a graph isomorphic to $G_{p-4}$, so 
		\[
		b_{p}=2b_{p-1}+a_{p-4}.
		\]
		We can calculate $a_3=4, a_4=9$ and $b_4=1, b_5=3, b_6=8$. Define $a_2:=2$, so the equation $b_6=2b_5+a_2$ holds. Let $F(x)=\sum_{p\geq 2} a_p x^p$ and $G(x)=\sum_{p\geq 4} b_p x^p$.
		
		The equation $b_{p}=2b_{p-1}+a_{p-4}$ holds for $p\geq 6$. The equation $a_p=2a_{p-1}+b_p$ holds for $p\geq 4$. 
		
		From the equation $b_{p}=2b_{p-1}+a_{p-4}$ and the first terms, we have $F(x)=2x F(x)+G(x)+2x^2$.
		So $$G(x)=F(x)-2xF(x)-2x^2.$$
		From the equation $b_{p}=2b_{p-1}+a_{p-4}$ and the first terms, we have 
		\[
		G(x)=2x G(x)+x^4 F(x)+x^4+x^5.
		\]
		\noindent Substituting, we get
		\[
		F(x)-2xF(x)-2x^2=2xF(x)-4x^2 F(x)-4x^3+x^4 F(x)+x^4+x^5
		\]
		\[
		F(x)\left(1-4x+4x^2-x^4\right)=x^4+x^5-4x^3+2x^2
		\]
		\[
		F(x)=-\frac{x^4+x^5-4x^3+2x^2}{(x-1)^2(x+1+\sqrt{2})(x+1-\sqrt{2})}.
		\]
		\[
		F(x)=-x^2\frac{x^2+2x-2}{(x-1)(x+1+\sqrt{2})(x+1-\sqrt{2})}.
		\]
		\noindent From here we can use partial fractions, the fraction is irreducible. We get $$ a_p=\frac{1}{4}(\sqrt{2}+1)^p+\frac{1}{4}(1-\sqrt{2})^p+ \frac{1}{2} \text{ for } p\geq 2 .$$
		
		We return to the initial setting. Assume $(Z,0)$ is in the independent set. After erasing $(Z,0)$ and its neighbors from $\HH_A$, we get a graph isomorphic to $G_p$ and it has $a_p$ independent sets. 
		
		Assume neither $(Y,0)$ or $(Z,0)$ belong to the independent set. Let $G^*_p$ be the graph obtained after deleting $(Y,0)$ and $(Z,0)$ from $\HH_A$. And let $c_p$ be the number of independent sets in $G^*_p$. 
		\begin{figure}[H]
			\centering
			\begin{tikzpicture}[thick,scale=0.8]
				\draw[color=white, use as bounding box] (2,-1) rectangle (8,3);
				\draw (8,2.7) node [fill=black!0,draw=black!0] {$(Z,p-1)$};
				\draw (8,-0.7) node [fill=black!0,draw=black!0] {$(Y,p-1)$};
				\foreach \x/\y in {1/1,2/2,3/3,4/4} 
				{\draw (2*\x,0) -- (2*\y,2);}
				\foreach \x/\y in {1/1,2/2,3/3}
				{\draw (2*\y,2) -- (2*\x+2,0);}
				\foreach \x/\y in {2/2,3/3,4/4}
				{\draw (2*\y,2) -- (2*\x-2,0);}
				\draw (2,0) node [label=below:{$(Y,1)$}] {};
				\draw (4,0) node [label=below:{$(Y,2)$}] {};
				\draw (6,0) node [label=below:{$\dots$}] {};
				\draw (8,0) node [] {};
				\draw (2,2) node [label=above:{$(Z,1)$}] {};
				\draw (4,2) node [label=above:{$(Z,2)$}] {};
				\draw (6,2) node [label=above:{$\dots$}] {};
				\draw (8,2) node [] {};
			\end{tikzpicture}
			\caption{The graph $G_p^*$.}
			\label{fig:Gp*}
		\end{figure}
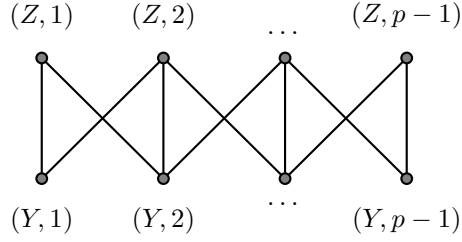
		If $(Y,1)$ is on the independent set, we erase it and its two neighbors so we get a graph and call it $H^{*}_{p}$. If $(Z,1)$ is on the independent set, we erase it we get a graph isomorphic to $H^{*}_p$. Call $d_p$ the number of independent sets in $H^{*}_{p}$.
		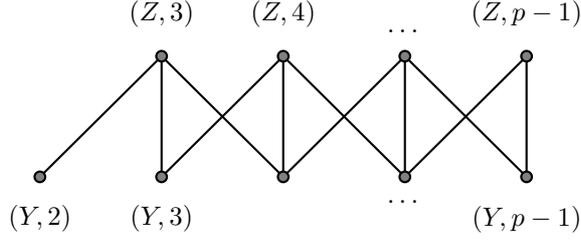
\begin{figure}[H]
			\centering
			\begin{tikzpicture}[thick,scale=0.8]
				\draw[color=white, use as bounding box] (0,-1) rectangle (8,3);
				\draw (8,2.7) node [fill=black!0,draw=black!0] {$(Z,p-1)$};
				\draw (8,-0.7) node [fill=black!0,draw=black!0] {$(Y,p-1)$};
				\foreach \x/\y in {0/1,1/2,2/3} 
				{\draw (2*\x,0) -- (2*\y,2);
					\draw (2*\y,2) -- (2*\x+2,0);
					\draw (2*\y,2) -- (2*\x+4,0);}
				\draw (8,0) -- (8,2);
				\draw (6,0) -- (8,2);
				\draw (0,0) node [label=below:{$(Y,2)$}] {};
				\draw (2,0) node [label=below:{$(Y,3)$}] {};
				\draw (4,0) node [] {};
				\draw (6,0) node [label=below:{$\dots$}] {};
				\draw (8,0) node [] {};
				\draw (2,2) node [label=above:{$(Z,3)$}] {};
				\draw (4,2) node [label=above:{$(Z,4)$}] {};
				\draw (6,2) node [label=above:{$\dots$}] {};
				\draw (8,2) node [] {};
			\end{tikzpicture}
			\caption{The graph $H_p^*$.}
			\label{fig:Hp*}
		\end{figure}
		Finally, if neither of $(Y,1)$ or $(Z,1)$ is on the independent set, we get a graph isomorphic to $G^*_{p-1}$. We obtained the recursion
		\[
		c_p=2d_{p}+c_{p-1}.
		\]
		Now, to count how many independent sets there are in $H^{*}_p$, we have that if $(Y,2)$ is not on $I$ then erasing it gives a graph isomorphic to $G^{*}_{p-2}$. If $(Y,2)$ is on $I$, erasing it and its neighbor we get a graph isomorphic to $H^{*}_{p-1}$. Therefore
		\[
		d_p=c_{p-2}+d_{p-1}.
		\]
		We realize $c_2=3, c_3=7$ and $d_3=2, d_4=5$.
		Let $F^{*}(x)=\sum_{p\geq 2}c_p x^p$ and $G^{*}(x)=\sum_{p\geq 3}d_p x^p$. The recursion $c_p=2d_{p}+c_{p-1}$ works for $p \geq 3$. In generating function form, the first recursion becomes
		\[
		F^{*}(x)=2G^{*}(x)+xF^{*}(x)+3x^2.
		\]
		\[
		G^{*}(x)=(F^{*}(x)-xF^{*}(x)-3x^2)/2.
		\]
		The recursion $d_p=c_{p-2}+d_{p-1}$ holds for $p\geq 4$ and it becomes
		\[
		G^*(x)=x^2F(x)+xG(x)+2x^3.
		\]
		Substituting we obtain
		\[
		F^{*}(x)=-\frac{x^3+3x^2}{-1+2x+x^2}.
		\]
		Again using partial fractions, we get 
		$$c_p=\frac{1}{2}(\sqrt{2}+1)^{p}+\frac{1}{2}(1-\sqrt{2})^p \text{ for } p\geq 3.$$
		We have that the total number of independent sets is 
		$$ 2a_p+c_p=(\sqrt{2}+1)^{p}+(1-\sqrt{2})^p+1 . \eqno \qedhere$$
	\end{proof}
\end{document}